\documentclass[a4paper]{article}

\usepackage{amsmath}
\usepackage{amssymb}
\usepackage{amsfonts}
\usepackage{enumerate}
\usepackage{vmargin}

\newcommand{\tens}{\otimes}

\newcommand{\qed}{{\vrule height5pt width5pt depth1pt}}

\newcommand{\bB}{{\mathbb{B}}}

\newcommand{\bM}{{\mathbb{M}}}

  \newcommand{\A}{{\mathcal{A}}}
  \newcommand{\B}{{\mathcal{B}}}
  \newcommand{\C}{{\mathcal{C}}}
  \newcommand{\D}{{\mathcal{D}}}

\renewcommand{\L}{{\mathcal{L}}}
  \newcommand{\M}{{\mathcal{M}}}
  \newcommand{\N}{{\mathcal{N}}}

\renewcommand{\S}{{\mathcal{S}}}
  \newcommand{\T}{{\mathcal{T}}}
  \newcommand{\U}{{\mathcal{U}}}

  \newcommand{\X}{{\mathcal{X}}}
  \newcommand{\Y}{{\mathcal{Y}}}


\renewcommand{\phi}{\varphi}
\newcommand{\upchi}{{\raise.35ex\hbox{\ensuremath{\chi}}}}






\newcommand{\id}{{\operatorname{id}}}

\newcommand{\m}{\operatorname{m}}



\newtheorem{thm}{Theorem}[section]
\newtheorem{defi}[thm]{Definition}
\newtheorem{prop}[thm]{Proposition}
\newtheorem{cor}[thm]{Corollary}
\newtheorem{lemma}[thm]{Lemma}

\newcounter{remark}
\setcounter{remark}{1}
\newenvironment{rk}{{\noindent
\bf Remark \addtocounter{thm}{1}\arabic{section}.\arabic{thm} \ }}
{
\smallskip
}

\newenvironment{proof}[1][]{\noindent {\it Proof #1}: }{\hbox{~} \hfill\qed
\smallskip
}

\title{A noncommutative Amir-Cambern theorem for von Neumann algebras and nuclear $C^*$-algebras}
\date{}
\author{\'Eric Ricard \and Jean Roydor}

\begin{document}

\maketitle

\begin{abstract} We prove that von Neumann algebras and separable nuclear $C^*$-algebras are stable for the Banach-Mazur cb-distance. A technical step is to show that unital almost completely isometric maps between $C^*$-algebras are almost multiplicative and almost selfadjoint. Also as an intermediate result, we compare the Banach-Mazur cb-distance and the Kadison-Kastler distance. Finally, we show that if two $C^*$-algebras are close enough for the cb-distance, then they have at most the same length.

\end{abstract}

\section{Introduction}

This note concerns perturbations of operator algebras as operator
spaces, more precisely perturbations relative to the Banach-Mazur
cb-distance. In \cite{Pi0}, G. Pisier introduced the Banach-Mazur
cb-distance (or cb-distance in short) between two operator spaces
$\X,\Y$:

$$d_{cb}(\X,\Y)=\inf \big\{ \| T \|_{cb} \| T^{-1} \|_{cb}
\big\}, $$ where the infimum runs over all possible linear
completely bounded isomorphisms $T:\X \to \Y$. This extends naturally the
classical Banach-Mazur distance for Banach spaces when these are
endowed with their minimal operator space structure (in particular, the Banach-Mazur distance and the cb-distance between two $C(K)$-spaces coincide). For background on
completely bounded maps and operator space theory the reader is
referred to \cite{BLM}, \cite{ER1}, \cite{Pa} and \cite{P}.\\
 Let us
recall the generalization of Banach-Stone theorem obtained independently by 
 D. Amir and M. Cambern (see \cite{A}, \cite{Ca}): if the Banach-Mazur
distance between two $C(K)$-spaces is strictly smaller than 2, then
they are $*$-isomorphic (as $C^*$-algebras). Actually, this is also true for spaces of continuous functions vanishing at infinity on locally compact Hausdorff spaces. One is tempted to extend
the Amir-Cambern Theorem to noncommutative $C^*$-algebras. In \cite{K}, R. Kadison described isometries between $C^*$-algebras, in particular the isometric structure of a $C^*$-algebra only determines its Jordan structure, hence to recover the $C^*$-structure we need a priori assumption on the cb-distance (not only on the classical Banach-Mazur distance). Here, we prove:

\smallskip

\noindent \textbf{Theorem A.}   \textit{Let $\A$ be a separable nuclear $C^*$-algebra or a von Neumann algebra, then there exists an $\varepsilon_0 >0$ such that for any $C^*$-algebra $\B$, the inequality 
$d_{cb}(\A,\B)<1+\varepsilon_0$ implies that $\A$ and $\B$ are $*$-isomorphic.\\
When $\A$ is a separable nuclear $C^*$-algebra, one can take $\varepsilon_0=3.10^{-19}$. When $\A$ is a von Neumann algebra, $\varepsilon_0=4.10^{-6}$ is sufficient.}

\smallskip

Such a result can not be extended to all unital
$C^*$-algebras, see Corollary \ref{evian} below for a counter-example (derived from \cite{CC}) involving nonseparable $C^*$-algebras.\\
The proof of Theorem A is totally different
from the commutative case, the cb-distance concerns only the operator
space structure, hence the basic idea is to gain the algebraic
structure. It is known that unital completely isometric linear isomorphisms
between operator algebras are necessarily multiplicative (see Theorem 4.5.13 \cite{BLM}). Therefore, one wants to
prove that almost completely isometric maps are almost multiplicative; in the sense that the defect of multiplicativity has small cb-norm as a bilinear map. We manage to check this by an ultraproduct
argument (see Proposition \ref{gene}) but without any explicit control
on the defect of multiplicativity. When maps are between $C^*$-algebras, we can drop the
`unital' hypothesis and show that the unitization of an almost
completely isometric map between $C^*$-algebras is almost
multiplicative with explicit bounds.
Consequently, starting from a linear cb-isomorphism with small bound
between $C^*$-algebras, one can define a new multiplication on each
of them close to the original ones. Then, in the spirit of \cite{J1} or \cite{RT},
we use the vanishing of the second and third completely bounded
Hochschild cohomology groups of an operator algebra over itself to
establish a strong stability property under perturbation by close
multiplications (see Proposition \ref{P:3}). It is crucial to work
with the completely bounded cohomology here, because we can exploit the
deep result that every completely bounded cohomology group of a
von Neumann algebra over itself vanishes (see \cite{SS}), this is
unknown for the bounded cohomology.  This allows us to conclude for
von Neumann algebras.\\
 For separable nuclear $C^*$-algebras, the
strategy is different, because vanishing of completely bounded
cohomology groups is not available. First, we
compare the cb-distance $d_{cb}$ and the completely bounded Kadison-Kastler distance
$d_{KK,cb}$ (see the definition below):

\smallskip

 \noindent \textbf{Theorem B.}  \textit{There exists a constant $K>0$, such that for any $C^*$-algebras $\A$ and $\B$:
$$d_{KK,cb}(\A,\B)\leq K \sqrt{\ln d_{cb}(\A,\B)}.$$ 
One can choose $K=3620$, when $d_{cb}(\A,\B)< 1 + 10^{-7}.$}

\smallskip
In order to prove this theorem, we need to control explicitly the defect of selfadjointness of a unital almost completely isometric map.
Then we will use stability of 
separable nuclear $C^*$-algebras for the Kadison-Kastler distance, this is a major result in perturbation theory. Let us recall from \cite{C5} this result more precisely. As usual $H$ denotes a Hilbert space and $\bB(H)$ its bounded linear endomorphisms.
 Let $\A,\, \B$ be subalgebras of $\bB(H)$, the Kadison-Kastler distance
between $\A$ and $\B$ inside $\bB(H)$ is
$$d_{KK, H}(\A,\B)=d_{\bB(H)} \big (Ball(\A),Ball(\B) \big ),$$  where $d_{\bB(H)}$ denotes the Hausdorff distance and $Ball(\A)$ (respectively $Ball(\B)$) denotes the unit ball of $\A$ ($\B$ respectively).
More generally, for two $C^*$-algebras $\A$ and $\B$,  the Kadison-Kastler distance and its completely bounded version are defined as:   
$$ d_{KK}(\A,\,\B)=\inf_{\pi,\rho,H}  \big\{d_{KK, H} \big ( \pi(\A),\rho(\B) \big ) \big \},$$
$$ d_{KK,cb}(\A,\,\B)=\inf_{\pi,\rho,H} \big\{ \sup_n \big\{ d_{KK, \ell^2_n \otimes H} \big ( (\id_{\mathbb{M}_n} \otimes \pi)(\A), (\id_{\mathbb{M}_n} \otimes \rho)(\B) \big ) \big \} \big \},$$
where the infimum runs over all faithful unital $*$-representations
$\pi:\A\to \bB(H),\,\rho:\B\to \bB(H)$ on the same Hilbert space.
The main result of \cite{C5} is: for any $\gamma<42^{-1}.10^{-4}$, if
$\A$ is a separable nuclear $C^*$-algebra and $\B$ is another
$C^*$-algebra, if $d_{KK}(\A,\B)\leq \gamma$ then $\A$ and $\B$ are
$*$-isomorphic. Therefore, it is clear that the $C^*$-case of Theorem
A is a corollary of this last result and our Theorem B.

 We already mentioned that an Amir-Cambern type theorem is false for any
$C^*$-algebras, however we can try to prove that some $C^*$-algebraic
invariants are preserved under perturbation relative to the
cb-distance.  The notion of length of an operator algebra has been
defined by G. Pisier in \cite{Pi1} in order to attack the Kadison
similarity problem (he proved that a $C^*$-algebra has finite length
if and only if it has the Kadison similarity property). In \cite {C6} Theorem 4.4,
the authors proved that having finite length is a property which is stable under perturbation for
the Kadison-Kastler distance. Here, we prove that if two $C^*$-algebras are close enough for the cb-distance, then they have at most the same length.

\smallskip

 \noindent \textbf{Theorem C.}  \textit{ Let $K\geq 1$ and 
$\ell \in \mathbb{N}\backslash \{0\}$ fixed but arbitrary constants. 
If
$\A$ and $\B$ are unital $C^*$-algebras with
$d_{cb}(\A,\B)<1+10^{-4}4^{-\ell}K^{-2}$ and $\A$ has length at most $\ell$
and length constant at most $K$, then $\B$ has length at most $\ell$.
}

\section{Almost completely isometric maps}

 This section starts with few technical lemmas relating algebraic
 properties to norm estimates in operator algebras. We next use them
 to study almost completely isometric isomorphisms. We implicitly refer to
 \cite{ER1}, \cite{Pa} and  \cite{P} for basic notions around operator spaces.

It is well known that unital
completely isometric isomorphisms between operator algebras are
necessarily multiplicative (see Theorem 4.5.13 \cite{BLM}). Hence one can hope that unital
almost completely isometric bijections are almost multiplicative and almost selfadjoint. This can be checked easily by an ultraproduct argument, but the important point is to control explicitly the defect of multiplicativity and the defect of selfadjointness.

When $T:\A\to \B$ is a map between two operator algebras, the defect of multiplicativity of $T$ is denoted by
$T^\vee$. It consists of the bilinear map $T^\vee:\A^2\to \B$ given by
$T^{\vee}(a,b)=T(ab)-T(a)T(b)$. As usual when dealing with bilinear maps (see
section 1.4 in \cite{SS}), the completely bounded norm of $T^\vee$ is
the cb-norm of the induced linear map $T^\vee : \A\tens_h \A\to
\B$ on the Haagerup tensor product.

Given a cb map $T:\S\to \T$ between two operator systems, we use the notation 
$T^\star$ for the map defined on $\S$ by $T^\star(x)=T(x^*)^*$. 
We call defect of selfadjointness the linear map $T-T^\star$. 

\begin{prop}\label{gene} For any $\eta>0$, there exists $\rho \in ]0,1[$ such that for any unital operator algebras $\A$, $\B$, for any unital cb-isomorphism $T:\A \to \B$,
$\| T \|_{cb}\leq 1+\rho$ and $\| T^{-1} \|_{cb}\leq 1+\rho$ imply $\| T^{\vee} \|_{cb} < \eta$.
\end{prop}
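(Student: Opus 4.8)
The plan is to argue by contradiction using an ultraproduct construction, which is the natural way to produce a qualitative statement of this type without any quantitative control. Suppose the proposition fails for some $\eta>0$. Then for every $n\in\bN$ there exist unital operator algebras $\A_n$, $\B_n$ and unital cb-isomorphisms $T_n:\A_n\to\B_n$ with $\|T_n\|_{cb}\leq 1+1/n$ and $\|T_n^{-1}\|_{cb}\leq 1+1/n$, yet $\|T_n^\vee\|_{cb}\geq\eta$. By definition of the cb-norm on the Haagerup tensor product, for each $n$ we may choose a level $k_n$, elements $a_n=[a_n^{ij}]$, $b_n=[b_n^{ij}]$ in the unit balls of $M_{k_n}(\A_n)$ (or rather in $M_{p_n,k_n}$ and $M_{k_n,q_n}$ with the Haagerup norms $\leq 1$) witnessing $\|(\id\tens T_n^\vee)(a_n\odot b_n)\|\geq\eta/2$, where $\odot$ denotes the Haagerup product. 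I would then pass to the $C^*$-algebra ultraproduct: fix a free ultrafilter $\U$ on $\bN$, form $\A_\U=\prod_\U\A_n$ and $\B_\U=\prod_\U\B_n$ (these are unital operator algebras — indeed one may complete them to $C^*$-algebras or work with the operator-space ultraproduct; the key facts are in \cite{P} and \cite{ER1}), and let $T=\lim_\U T_n:\A_\U\to\B_\U$ be the induced map.

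The core of the argument is then that $T$ is a unital complete isometry. Indeed, $\|T\|_{cb}\leq\lim_\U(1+1/n)=1$ and similarly $T$ is invertible with $\|T^{-1}\|_{cb}\leq 1$; since $T$ is unital and bijective with $T,T^{-1}$ completely contractive, it is a unital complete isometry of operator algebras. By Theorem 4.5.13 of \cite{BLM}, $T$ is multiplicative, i.e. $T^\vee=0$ on $\A_\U$. On the other hand, the representatives $(a_n)$, $(b_n)$ define elements of matrix amplifications over $\A_\U$ with Haagerup norm $\leq 1$, and, because the Haagerup tensor norm and multiplication pass to ultraproducts in a compatible way, $\|(\id\tens T^\vee)\|\geq\limsup_\U\|(\id\tens T_n^\vee)(a_n\odot b_n)\|\geq\eta/2>0$, contradicting $T^\vee=0$. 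Hence no such $\eta$ can fail the conclusion, proving the proposition.

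The main obstacle — and the point requiring the most care — is the compatibility of the Haagerup tensor product with the ultraproduct, i.e. checking that the witnessing data for $\|T_n^\vee\|_{cb}$ actually survives the limit to give a nonzero $\|T^\vee\|_{cb}$. Concretely one must verify: (i) that the unit ball of the relevant matrix level over the Haagerup tensor product $\A_\U\tens_h\A_\U$ receives the ultraproduct of the balls of $M_{k_n}(\A_n\tens_h\A_n)$ in a way that respects the factorization $a\odot b$ with $\|a\|,\|b\|\leq 1$; and (ii) that $(\id\tens T^\vee)(\bar a\odot\bar b)=\lim_\U(\id\tens T_n^\vee)(a_n\odot b_n)$ in norm, which follows from $T^\vee(\bar a,\bar b)=\lim_\U T_n^\vee(a_n^{ij},b_n^{jl})$ entrywise together with the fact that norms on finite matrices over an ultraproduct are the ultralimits of the norms. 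A mild subtlety is that the levels $k_n$ (and the $p_n,q_n$ in the factorization) may be unbounded; this is harmless because for each fixed index $i,l$ the entry is a finite sum $\sum_j T_n^\vee(a_n^{ij},b_n^{jl})$ only if $k_n$ is finite for each $n$ — which it is — and the contradiction is reached working at a single ultraproduct element, not requiring uniformity in the level. Finally one should note the reduction that allows restricting attention to a \emph{single} pair $(a_n,b_n)$ per $n$: by homogeneity we may normalize so that the Haagerup-norm of each factor is $\leq 1$ and the defect is $\geq\eta/2$, and that is all the ultraproduct needs to see.
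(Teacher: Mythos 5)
Your overall strategy --- contradiction, ultraproduct of the $(\A_n,\B_n,T_n)$, and the fact that a unital surjective complete isometry of operator algebras is multiplicative (Theorem 4.5.13 of \cite{BLM}) --- is exactly the paper's. The gap is at the step you yourself flag as ``the point requiring the most care'': transporting the witnesses of $\|T_n^\vee\|_{cb}\geq\eta$ into the ultraproduct. Your witnesses live at matrix levels $k_n$ (and rectangular sizes $p_n,q_n$) that may be unbounded in $n$, and in that case the sequences $(a_n)_n$, $(b_n)_n$ do not define elements of $M_{p,k}(\A_\U)$ for any finite $p,k$; there is no ``single ultraproduct element'' at which to evaluate $T^\vee$. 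Your stated reason for why this is harmless --- that each individual $k_n$ is finite --- misses the point: the obstruction is the unboundedness of the sequence $(k_n)$, not the finiteness of its terms. Likewise your step (i), the compatibility of the Haagerup tensor product with ultraproducts, is a genuinely delicate assertion that you do not prove, so the inequality $\|T^\vee\|_{cb}\geq\limsup_\U\|(\id\tens T_n^\vee)(a_n\odot b_n)\|$ does not follow from what is written.

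The paper circumvents both issues with one device: before taking the ultraproduct, amplify by $\mathbb{K}^1$, the unitization of the compact operators on $\ell_2$, and use the identity $\|T_n^\vee\|_{cb}=\|(\id_{\mathbb{K}^1}\tens T_n)^\vee\|$, where the right-hand side is the plain norm of the bilinear defect. This absorbs all matrix levels at once, so the witnesses become two elements $u_n,v_n$ of the unit ball of the single unital operator algebra $\mathbb{K}^1\tens_{\min}\A_n$; their classes $\dot u,\dot v$ in $\A_\U=\Pi_n\,\mathbb{K}^1\tens_{\min}\A_n/\U$ then satisfy $\|T_\U(\dot u\dot v)-T_\U(\dot u)T_\U(\dot v)\|\geq\eta$, contradicting multiplicativity of $T_\U$ without ever invoking an ultraproduct of Haagerup tensor products. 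You should either adopt this amplification or supply an actual argument for your step (i); as written the proof fails at its decisive step.
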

\begin{proof}
Suppose the assertion is false. Then there exists $\eta_0>0$ such that
for every positive integer $n \in \mathbb{N}\backslash \{0\}$, there
is a unital cb-isomorphism $T_n:\A_n \to \B_n$ between some unital
operator algebras satisfying $$\| T_n \|_{cb}\leq 1+\frac{1}{n}, ~~\|
T_n^{-1} \|_{cb}\leq 1+\frac{1}{n} ~~\hbox{and}~~ \| T_n^{\vee }
\|_{cb} \geq \eta_0.$$ Let $\U$ be a nontrivial ultrafilter on
$\mathbb{N}$, let us denote $\A_\U$ (resp. $\B_\U$) the ultraproduct
$\Pi_n \mathbb{K}^1 \otimes_{\min} \A_n /\U$ (resp. $\Pi_n
\mathbb{K}^1 \otimes_{\min} \B_n /\U$), here $\mathbb{K}^1$ denotes
the unitization of the $C^*$-algebra of all compact operators on
$\ell_2$. Then $\A_\U$ (resp. $\B_\U$) is a unital operator algebra
(see \cite{BLM}). Now consider $T_\U: \A_\U \to \B_\U$ the
ultraproduct map obtained from the $\id_{\mathbb{K}^1} \otimes
T_n$'s. Hence $T_\U$ is a unital surjective linear complete isometry between
operator algebras, so $T_\U$ is multiplicative (see Theorem 4.5.13
\cite{BLM}) hence $T_\U^\vee=0$. 

This contradicts the hypothesis for all $n$, $\|
T_n^{\vee } \|_{cb} \geq \eta_0$. Indeed $\| T_n^{\vee} \|_{cb}=\|
(\id_{\mathbb{K}^1} \otimes T_n)^{\vee } \|$, so there are $u_n,\,v_n$
in the closed unit ball of $\mathbb{K}^1 \otimes_{\min} \A_n$ such
that
$$\big\| (\id_{\mathbb{K}^1} \otimes T_n) (u_nv_n) -(\id_{\mathbb{K}^1} \otimes T_n) (u_n)(\id_{\mathbb{K}^1} \otimes T_n) (v_n) \big\| \geq \eta_0,$$
which implies that $$\big\| T_\U (\dot{u} \dot{v})-T_\U (\dot{u})T_\U (\dot{v}) \big\| \geq \eta_0$$
(where $\dot{x}$ denotes the equivalence class of $(x_{n})_n$ in $\A_\U$). 
\end{proof}

A similar proof gives

\begin{prop}\label{gene2} For any $\eta>0$, there exists $\rho \in ]0,1[$ such that for any operator systems $\S$, $\T$, for any unital cb-isomorphism $T:\S \to \T$,
$\| T \|_{cb}\leq 1+\rho$ and $\| T^{-1} \|_{cb}\leq 1+\rho$ imply $\| T -T^\star\|_{cb} < \eta$.
\end{prop}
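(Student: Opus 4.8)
The proof of Proposition \ref{gene2} follows the same ultraproduct scheme as Proposition \ref{gene}, so the plan is to adapt that argument to the operator-system setting. First I would argue by contradiction: if the statement fails, there is an $\eta_0>0$ and, for each positive integer $n$, a unital cb-isomorphism $T_n:\S_n\to\T_n$ between operator systems with $\|T_n\|_{cb}\le 1+\tfrac1n$, $\|T_n^{-1}\|_{cb}\le 1+\tfrac1n$, but $\|T_n-T_n^\star\|_{cb}\ge\eta_0$. As before, the last inequality means we can pick $u_n$ in the unit ball of $\mathbb{K}^1\otimes_{\min}\S_n$ (working with the ampliations $\id_{\mathbb{K}^1}\otimes T_n$ to capture the cb-norm by an ordinary norm) with $\big\|(\id_{\mathbb{K}^1}\otimes T_n)(u_n^*)^* - (\id_{\mathbb{K}^1}\otimes T_n)(u_n)\big\|\ge\eta_0$; here one uses that $\mathbb{K}^1\otimes_{\min}\S_n$ is again an operator system and $*$ is the natural involution on it.

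Next I would form the ultraproducts $\S_\U=\prod_n \mathbb{K}^1\otimes_{\min}\S_n/\U$ and $\T_\U=\prod_n\mathbb{K}^1\otimes_{\min}\T_n/\U$ along a nontrivial ultrafilter $\U$ on $\mathbb{N}$. The point is that an ultraproduct of operator systems is again an operator system (this is standard; it can be realized concretely via the second-dual / Hahn–Banach representation, or one simply notes that the matrix cones pass to the ultraproduct and satisfy the Choi–Effros axioms), and that the maps $\id_{\mathbb{K}^1}\otimes T_n$ induce a well-defined map $T_\U:\S_\U\to\T_\U$ which is a unital surjective complete isometry. The only genuinely new ingredient compared to Proposition \ref{gene} is the replacement of ``unital complete isometry between operator algebras is multiplicative'' by the analogous fact for operator systems: a unital surjective complete isometry between operator systems is a complete order isomorphism, hence $*$-preserving in the sense that $T_\U(x^*)=T_\U(x)^*$, i.e. $T_\U=T_\U^\star$. (This is classical — a unital complete isometry is completely positive, and so is its inverse, by the standard operator-system version of Theorem 4.5.13 of \cite{BLM}; a completely positive map commutes with the involution.)

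From $T_\U=T_\U^\star$ one derives the contradiction exactly as in Proposition \ref{gene}: denoting by $\dot u$ the class of $(u_n)_n$, we have $T_\U(\dot u^*)^*=T_\U(\dot u)$, yet the estimate $\big\|(\id_{\mathbb{K}^1}\otimes T_n)(u_n^*)^*-(\id_{\mathbb{K}^1}\otimes T_n)(u_n)\big\|\ge\eta_0$ for all $n$ forces $\|T_\U(\dot u^*)^*-T_\U(\dot u)\|\ge\eta_0>0$, a contradiction. The main obstacle — really the only subtle point — is justifying that the ultraproduct of operator systems is an operator system and that $T_\U$ is a well-defined unital complete isometry on it; once the operator-system ultraproduct formalism is in place, everything else is a verbatim transcription of the previous proof with ``multiplicative / $T^\vee=0$'' replaced by ``selfadjoint / $T-T^\star=0$''. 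In writing this up I would simply say ``a similar proof'' and indicate the two changes: use operator-system ultraproducts, and use that unital surjective complete isometries between operator systems are $*$-preserving.
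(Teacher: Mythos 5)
Your proposal is correct and matches the paper's intent exactly: the paper gives no written proof of Proposition \ref{gene2}, stating only that ``a similar proof'' to that of Proposition \ref{gene} works, and your write-up carries out precisely that adaptation (operator-system ultraproducts plus the fact that a unital surjective complete isometry between operator systems is a complete order isomorphism, hence $*$-preserving). No substantive differences to report.
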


We turn to quantitative versions of the previous Propositions for
$C^*$-algebras.\\
The next Lemma is interesting because it gives an operator space characterization (it involves computations on $2 \times 2$ matrices) of invertibility inside a von Neumann algebra.

\begin{lemma}\label{inver}
Let $\M$ be a von Neumann algebra and $x \in \M$, $\| x \| \leq
1$. Then, $x$ is invertible if and only if there exists $\alpha >0$
such that for any projection $y \in \M$, $$ \left\|
\left[ \begin{array}{c} x \\ y \end{array}\right] \right\|^2 \geq
\alpha + \| y \|^2 ~~\mbox{and}~~ \left\| \left[ \begin{array}{cc} x &
    y \end{array}\right] \right\|^2 \geq \alpha + \| y \|^2 \quad
(C)$$ If this holds, the maximum of the $\alpha$'s satisfying $(C)$
equals $\| x^{-1}\|^{-2}$ and $(C)$ holds for any $y\in \M$.
\end{lemma}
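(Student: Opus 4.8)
The plan is to translate condition $(C)$ into a purely spectral statement about the positive contractions $x^*x$ and $xx^*$, and then to probe that statement with spectral projections sitting near the bottom of the spectrum. Writing $a=x^*x$ and $b=xx^*$, the $C^*$-identity for $2\times 1$ and $1\times 2$ matrices gives $\left\|\begin{bmatrix} x \\ y\end{bmatrix}\right\|^2=\|x^*x+y^*y\|$ and $\left\|\begin{bmatrix} x & y\end{bmatrix}\right\|^2=\|xx^*+yy^*\|$; since $y$ runs over projections, $y^*y=yy^*=y$ and $\|y\|^2=1$ (the case $y=0$ being harmless, as $\|x\|\le 1$). Hence, for a fixed $\alpha$, condition $(C)$ is equivalent to: $\|a+y\|\ge 1+\alpha$ and $\|b+y\|\ge 1+\alpha$ for every nonzero projection $y\in\M$.

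The main lemma I would isolate is: for a positive contraction $a$ in a von Neumann algebra $\M$, $\inf\{\|a+y\| : y\in\M \text{ a nonzero projection}\}=1+\min\sigma(a)$. For the inequality $\ge$, from $a\ge(\min\sigma(a))\cdot 1$ one gets $a+y\ge(\min\sigma(a))\cdot 1+y\ge 0$, and taking norms of these positive elements yields $\|a+y\|\ge\min\sigma(a)+\|y\|=1+\min\sigma(a)$; this uses only the $C^*$-structure, and applied to $a=x^*x$ and $b=xx^*$ it also shows that $(C)$ holds with $\alpha=\min(\min\sigma(x^*x),\min\sigma(xx^*))$ for \emph{every} $y\in\M$, not just for projections. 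For the inequality $\le$, I would test against $y_t=\chi_{[\,\min\sigma(a),\,\min\sigma(a)+t\,]}(a)$, which lies in $\M$ and is nonzero because $\min\sigma(a)\in\sigma(a)$; as $y_t$ commutes with $a$, one splits $a+y_t=(a+1)y_t\oplus a(1-y_t)$, bounds the first summand by $\min\sigma(a)+t+1$ and the second by $\|a\|\le 1$, and lets $t\to 0^+$.

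Granting this lemma, the rest is bookkeeping. An $\alpha>0$ satisfying $(C)$ exists iff $\min\sigma(x^*x)>0$ and $\min\sigma(xx^*)>0$, i.e. iff $x^*x$ and $xx^*$ are both invertible, i.e. iff $x$ is invertible; and in that case the admissible $\alpha$'s are exactly those with $0<\alpha\le\min(\min\sigma(x^*x),\min\sigma(xx^*))$, so this minimum is the maximal one. Finally, when $x$ is invertible the nonzero spectra of $x^*x$ and $xx^*$ coincide and neither contains $0$, so the two minima agree and equal $\min\sigma(xx^*)=\|(xx^*)^{-1}\|^{-1}=\|x^{-1}\|^{-2}$; and by the remark in the previous paragraph $(C)$ with this $\alpha$ in fact holds for all $y\in\M$. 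I do not anticipate a real obstacle; the only delicate points are the membership and nonvanishing of the spectral projections $y_t$ in $\M$ — precisely where the von Neumann (rather than merely $C^*$) hypothesis is used — together with keeping straight which of the two inequalities in $(C)$, the column one (governed by $x^*x$) or the row one (governed by $xx^*$), is being invoked at each step.
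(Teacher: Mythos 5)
Your proof is correct and follows essentially the same route as the paper: the forward direction and the "$(C)$ holds for all $y$" claim come from $x^*x\ge \min\sigma(x^*x)\cdot 1=\|x^{-1}\|^{-2}\cdot 1$, and the converse is obtained by testing $(C)$ against spectral projections of $x^*x$ (and $xx^*$) near the bottom of the spectrum — the paper uses $\chi_{[0,\lambda]}(x^*x)$ where you use $\chi_{[\min\sigma(a),\min\sigma(a)+t]}(a)$, which is the same mechanism. Your repackaging of the key step as the exact identity $\inf_y\|a+y\|=1+\min\sigma(a)$ is a clean but inessential variation.
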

\begin{proof}
If $x\in \M$ is invertible and $y\in \M$ is arbitrary, by functional calculus, $x^*x\geq
\|x^{-1}\|^{-2}$, thus
$$\left\| \left[ \begin{array}{c} x \\ y \end{array}\right]
\right\|^2=\|x^*x+y^*y\| \geq \big\| \|x^{-1}\|^{-2}
+y^*y\big\|=\|x^{-1}\|^{-2}+ \|y\|^2.$$ Thanks to a similar argument
for the row estimate, we get that $(C)$ holds with
$\alpha=\|x^{-1}\|^{-2}$.

Assume that $(C)$ is satisfied.  Fix $\lambda\geq 0$ and let
$p_\lambda=\chi_{[0,\lambda]}(x^*x)\in \M$ be the spectral projection
of $|x|^2$ corresponding to $[0,\lambda]$. By the functional calculus
$ 1+\lambda \geq x^*x+ p_\lambda$ as $\|x\|\leq 1$.  Taking
$y=p_\lambda$ in the first part of $(C)$ gives,
$$1+\lambda \geq \left\| \left[ \begin{array}{c} x
    \\ p_\lambda \end{array}\right] \right\|^2\geq \alpha +
\|p_\lambda\|^2.$$ Hence $p_\lambda=0$ for $\lambda<\alpha$. Thus
$x^*x$ is invertible and $\big\|(x^*x)^{-1}\big\|\leq
\alpha^{-1}$. Similarly $xx^*$ must have the same property and $x$ is
left and right invertible hence invertible. In the polar decomposition
of $x=u|x|$, $u$ must be a unitary so that we finally get $\alpha\leq
\|x^{-1}\|^{-2}$ and the proof is complete.
\end{proof}

The next Proposition generalizes the well-known fact that a complete isometry between $C^*$-algebras sends unitaries to unitaries.

\begin{prop}\label{imageunit} Let $\A$, $\B$ be two 
$C^*$-algebras. Let $T:\A \to \B$ be a cb-isomorphism such that 
$\| T \|_{cb} \| T^{-1} \|_{cb} < \sqrt{2}$. Then, for any unitary $u\in A$, $T(u)$ is invertible and 
$$\| T(u)^{-1} \| \leq \frac{\| T^{-1}\|_{cb}}{\sqrt{2-\|T\|_{cb}^2\|T^{-1}\|_{cb}^2}}.$$
\end{prop}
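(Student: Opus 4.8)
The plan is to pass to second duals and reduce everything to Lemma \ref{inver}. Write $c=\|T\|_{cb}$ and $d=\|T^{-1}\|_{cb}$, so the hypothesis reads $c^2d^2<2$, while $cd\ge 1$ always holds. The bitranspose $T^{**}\colon\A^{**}\to\B^{**}$ is a cb-isomorphism between the enveloping von Neumann algebras, with $\|T^{**}\|_{cb}=c$ and $\|(T^{**})^{-1}\|_{cb}=\|(T^{-1})^{**}\|_{cb}=d$ (bitransposition preserves complete bounds and commutes with inversion). Set $x=c^{-1}T(u)\in\B^{**}$; since $u$ is a unitary, $\|x\|\le1$. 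I claim that $x$ satisfies condition $(C)$ of Lemma \ref{inver} inside $\M=\B^{**}$ with $\alpha=(2-c^2d^2)/(c^2d^2)>0$. Granting this, Lemma \ref{inver} shows that $x$, hence $T(u)$, is invertible in $\B^{**}$ with $\|x^{-1}\|^{-2}\ge\alpha$, i.e.\ $\|T(u)^{-1}\|=c^{-1}\|x^{-1}\|\le d\,(2-c^2d^2)^{-1/2}$, which is exactly the asserted estimate; moreover an element of $\B$ (read inside $\widetilde{\B}$ if $\B$ is nonunital) that is invertible in its enveloping von Neumann algebra is already invertible there, by spectral permanence of $C^*$-subalgebras.

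To verify $(C)$, fix a projection $y\in\B^{**}$ and put $z=(T^{-1})^{**}(y)\in\A^{**}$. The column $\left[\begin{array}{c} x\\ y\end{array}\right]$ is the image of $\left[\begin{array}{c} c^{-1}u\\ z\end{array}\right]$ under the column amplification of $T^{**}$, whose inverse (the column amplification of $(T^{-1})^{**}$) has norm at most $d$; hence
$$\left\|\left[\begin{array}{c} x\\ y\end{array}\right]\right\|^2 \ \ge\ \frac{1}{d^2}\,\big\|c^{-2}u^*u+z^*z\big\| \ =\ \frac{1}{d^2}\Big(\frac{1}{c^2}+\|z\|^2\Big),$$
the key simplification being that $u^*u=1$ is a scalar. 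Since $y=T^{**}(z)$ forces $\|z\|\ge\|y\|/c$, the right-hand side is at least $(1+\|y\|^2)/(c^2d^2)$, and because $\|y\|\in\{0,1\}$ an elementary check (using $cd\ge1$ when $\|y\|=0$, and getting equality when $\|y\|=1$) gives $(1+\|y\|^2)/(c^2d^2)\ge\alpha+\|y\|^2$. The row half of $(C)$ is obtained identically, with $u^*u,z^*z$ replaced by $uu^*,zz^*$.

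The substance of the argument is Lemma \ref{inver} together with the simple device of running the projection $y$ back through $T^{-1}$ before re-amplifying by $T$, so that the unitarity relation $u^*u=uu^*=1$ collapses the $T^{-1}$-distortion in the $u$-slot. I therefore do not expect a genuine obstacle here; the only point demanding care is the routine bookkeeping with the second-dual extension and its column/row amplifications.
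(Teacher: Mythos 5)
Your proof is correct and follows essentially the same route as the paper's: pass to the biduals, pull the column $\bigl[\begin{smallmatrix} T(u)\\ y\end{smallmatrix}\bigr]$ back through $T^{-1}$ so that unitarity of $u$ yields the exact lower bound $\frac{1}{c^2}+\|z\|^2$, verify condition $(C)$, and invoke Lemma \ref{inver}. The only cosmetic difference is that the paper first normalizes to $\|T\|_{cb}=1$ whereas you track the constant $c$ explicitly via the rescaling $x=c^{-1}T(u)$; the resulting bound is the same.
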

\begin{proof}
Passing to biduals, we can assume that $\A$ and $\B$ are von Neumann
algebras as $x\in \B$ is invertible in $\B$ if and only if it is invertible in $\B^{**}$.
Replacing $T$ by $T/\| T \|_{cb}$, we may assume that $\| T \|_{cb}=1$
and $\| T^{-1} \|_{cb} <\sqrt 2$. 

Let $y \in\B$ with $\|y\|=1$, as $\|T(u)\|\leq 1$: $$ \left\| \left[\begin{array}{c} T(u) \\ y \end{array}\right]\right \|^2
\geq \frac{1}{\| T^{-1} \|_{cb}^2} \left\|\left[\begin{array}{c} u
  \\ T^{-1}(y) \end{array}\right]\right\|^2 \geq \frac{1+\|T^{-1}(y)\|^2}{\| T^{-1}
  \|_{cb}^2}\geq \frac{2}{\| T^{-1}
  \|_{cb}^2}.$$
Hence $T(u)$ satisfies $(C)$ with $\alpha=\frac{2}{\| T^{-1}
  \|_{cb}^2}-1>0$. Finally applying Lemma \ref{inver}, 
we obtain $$\| T(u)^{-1} \|^2 \leq \frac{1}{\alpha} \leq
\frac{\| T^{-1} \|_{cb}^2}{2-\| T^{-1} \|_{cb}^2}.$$
\end{proof}

This Lemma is folklore, we give a quick proof.

\begin{lemma}\label{unit} Let $\A$ be a unital $C^*$-algebra and  $x \in \A$ 
invertible. Then there exists a unitary $u \in \A$ such that  $\| x-u \| = \max \Big\{ \| x \| -1, 1-\frac{1}{\| x^{-1} \|} \Big\} .$
\end{lemma}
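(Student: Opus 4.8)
The plan is to reduce everything to the polar decomposition of $x$ followed by a spectral computation on a single positive element. Since $\A$ is unital and $x$ is invertible, $x^*x$ is positive and invertible, so $|x|:=(x^*x)^{1/2}$ is positive and invertible, with $\||x|\|=\|x\|$ and $\||x|^{-1}\|=\|x^{-1}\|$ (the latter because $\||x|^{-1}\|^2=\|(x^*x)^{-1}\|=\|x^{-1}(x^{-1})^*\|=\|x^{-1}\|^2$). First I would set $u:=x|x|^{-1}$ and check directly that $u^*u=|x|^{-1}x^*x|x|^{-1}=1$ and $uu^*=x(x^*x)^{-1}x^*=1$, so that $u$ is a unitary of $\A$ and $x=u|x|$ is a genuine polar decomposition inside $\A$.

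From $x=u|x|$ we get $x-u=u(|x|-1)$, and since left multiplication by a unitary is isometric in a $C^*$-algebra, $\|x-u\|=\||x|-1\|$. It then remains to identify this last quantity. As $|x|-1$ is selfadjoint, $\||x|-1\|=\spr(|x|-1)=\max\{|t-1|:t\in\sigma(|x|)\}$. Since $|x|$ is positive and invertible, $\sigma(|x|)$ is a compact subset of $(0,\infty)$ whose maximum is $\||x|\|=\|x\|$ and whose minimum is $\||x|^{-1}\|^{-1}=\|x^{-1}\|^{-1}$; hence $\sigma(|x|)\subseteq[\|x^{-1}\|^{-1},\|x\|]$ with both endpoints attained. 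Because $t\mapsto|t-1|$ is convex, its maximum over that interval is attained at an endpoint, so $\||x|-1\|=\max\{|\|x\|-1|,\,|1-\|x^{-1}\|^{-1}|\}$.

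Finally, $\|x\|\,\|x^{-1}\|\geq\|xx^{-1}\|=1$ gives $\|x^{-1}\|^{-1}\leq\|x\|$, so a short case check — according to whether $1$ lies below, inside, or above the interval $[\|x^{-1}\|^{-1},\|x\|]$ — shows that $\max\{|\|x\|-1|,\,|1-\|x^{-1}\|^{-1}|\}$ equals $\max\{\|x\|-1,\,1-\|x^{-1}\|^{-1}\}$, which is exactly the asserted value. There is essentially no obstacle here; the only point requiring a little care is that the two numbers $\|x\|-1$ and $1-\|x^{-1}\|^{-1}$ may individually be negative, which is why one must perform the last case analysis rather than simply dropping the absolute values. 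Everything else is the continuous functional calculus on the abelian $C^*$-subalgebra generated by $|x|$ and $1$.
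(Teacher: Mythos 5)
Your proof is correct and follows exactly the route the paper takes: polar decomposition $x=u|x|$ inside $\A$ (valid since $|x|$ is invertible), the identity $\|x-u\|=\||x|-1\|$, and the continuous functional calculus to evaluate that norm from the spectrum of $|x|$. The paper leaves the spectral computation as "not difficult to conclude," and your endpoint/case analysis (including the observation that $\|x\|\,\|x^{-1}\|\geq 1$ prevents both quantities from being negative) supplies precisely the missing details.
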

\begin{proof}
Write the polar decomposition of $x=u \vert x \vert$. As $x$ is
invertible, $\vert x \vert$ is strictly positive element of $\A$, so $u$ is a unitary of $\A$. Obviously, $\| x-u \| =\| \vert x \vert-1 \|
$. Seeing $\vert x \vert$ as a strictly positive function thanks to the functional calculus, it is not
difficult to conclude.
\end{proof}
 
The next Lemma is the key result to compute explicitly the defect of multiplicativity. As in Lemma \ref{inver}, operator space structure is needed.

\begin{lemma}\label{unitmult} Let $u,\, v$ be two unitaries in $\bB(H)$. 
 Let $x \in \bB(H)$ and $c \geq 1 $ such that 
$$\left\| \left[ \begin{array}{cc} u  & x  \\ -1 & v \end{array}\right] \right\| \leq c\sqrt{2},$$
then $\big\| x-uv \big\| \leq 2\sqrt{c^2-1} .$
\end{lemma}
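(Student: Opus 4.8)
The plan is to interpret the $2\times 2$ operator-matrix hypothesis as a statement about a single contraction and to exploit the fact that when the norm of a contraction-like matrix is close to its minimal possible value, the off-diagonal entry is essentially forced. Concretely, write $M=\begin{bmatrix} u & x \\ -1 & v\end{bmatrix}$. Since $u,v$ are unitaries, the first column $\begin{bmatrix} u \\ -1\end{bmatrix}$ has norm exactly $\sqrt 2$ and the first row $\begin{bmatrix} u & x\end{bmatrix}$ has norm at least $\sqrt{1+\|x\|^2}$, so the hypothesis $\|M\|\le c\sqrt2$ already gives crude control $\|x\|\le\sqrt{2c^2-1}$; but that is not sharp enough and we need to use the interaction between the rows.

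The key computational step I would carry out is to compute $M^*M$ (or $MM^*$) and compare it to a scalar. We have
$$
M^*M=\begin{bmatrix} u^*u+1 & u^*x - v \\ x^*u - v^* & x^*x+v^*v\end{bmatrix}
=\begin{bmatrix} 2 & u^*x-v \\ x^*u-v^* & x^*x+1\end{bmatrix}.
$$
The hypothesis says $M^*M\le 2c^2$. In particular the $(2,2)$ entry gives $\|x\|^2+1\le 2c^2$, i.e. $\|x\|^2\le 2c^2-1$. More importantly, positivity of $2c^2-M^*M$ together with a Schur-complement / $2\times 2$ positivity argument relates the off-diagonal entry $u^*x-v$ to the diagonal slack: $2c^2 I - M^*M\ge 0$ forces
$$
\|u^*x-v\|^2 \le (2c^2-2)\,(2c^2-\|x\|^2-1).
$$
Now one optimizes the right-hand side over the admissible range of $\|x\|^2\in[0,2c^2-1]$; the worst case is $\|x\|=0$, giving $\|u^*x-v\|^2\le (2c^2-2)(2c^2-1)$, but in fact a cleaner route is to note that both $M^*M\le 2c^2$ and $MM^*\le 2c^2$ hold, and averaging/combining the two resulting inequalities yields $\|u^*x-v\|\le 2(c^2-1)^{1/2}\cdot(\text{something})$. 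Multiplying $u^*x-v$ on the left by the unitary $u$ gives $\|x-uv\|=\|u^*x-v\|$, which is exactly the desired conclusion once the constant is pinned down to $2\sqrt{c^2-1}$.

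The main obstacle is getting the constant exactly $2\sqrt{c^2-1}$ rather than something merely of the same order: the naive Schur-complement bound above produces $\sqrt{(2c^2-2)(2c^2-1)}$ which is too big, so one must use both $M^*M\le 2c^2$ \emph{and} $MM^*\le 2c^2$ and play the $(1,2)$ entries of the two against each other. Explicitly, $MM^*\le 2c^2$ gives information about $xv^*-u$ (the $(1,2)$ entry of $MM^*$, up to adjoint), and combining $\|u^*x-v\|$ with $\|xv^*-u\|$ — both of which, after multiplying by unitaries, measure $\|x-uv\|$ — via the parallelogram-type identity or simply by adding the two positivity constraints $2c^2-M^*M\ge 0$ and $2c^2-MM^*\ge 0$ and evaluating on the vector $(\xi,-(uv)\xi)$ for a near-maximizing unit vector $\xi$, should collapse the cross terms and leave $\|x-uv\|^2\le 4(c^2-1)$. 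I would set up that single vector test-functional computation carefully, as that is where the factor $2$ (rather than $\sqrt2$ or $\sqrt{2c^2-1}$) is won.
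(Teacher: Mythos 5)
Your overall instinct --- that the bound must come from testing the norm inequality $\|M\eta\|\le c\sqrt2\,\|\eta\|$ on a well-chosen vector $\eta$ --- is exactly the mechanism of the paper's proof, but your write-up has a genuine gap at precisely the point where you defer the work. The Schur-complement half of your argument, as you concede, only yields $\|x-uv\|\le\sqrt{(2c^2-2)(2c^2-1)}$, which exceeds $2\sqrt{c^2-1}$ as soon as $c^2>3/2$, so everything rests on the final test-vector computation; and the vector you propose, $\eta=(\xi,-(uv)\xi)$, does not work. Applying $M$ to it gives first component $u\xi-xuv\xi$ and second component $-\xi-vuv\xi$: neither $M\eta$ nor $M^*\eta$ has a component of the form $(x-uv)$ applied to something, nor a component of norm exactly $2\|\xi\|$, so the cross terms do not collapse. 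The correct choice is $\eta=(-v\xi,\xi)$: then $M\eta=\bigl((x-uv)\xi,\,2v\xi\bigr)$, whence $\|(x-uv)\xi\|^2+4\|\xi\|^2\le 4c^2\|\xi\|^2$ and $\|x-uv\|\le 2\sqrt{c^2-1}$, using only the single constraint $\|M\|\le c\sqrt2$; the $MM^*$ constraint and the averaging you describe are unnecessary.

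The paper makes this choice transparent by first multiplying $M$ on the left by $\diag(u^*,1)$ and on the right by $\diag(1,v^*)$, a norm-preserving operation that reduces to the case $u=v=1$ with $x$ replaced by $u^*xv^*$ (and $\|u^*xv^*-1\|=\|x-uv\|$); after that reduction the test vector is simply $(-h,h)$ and the estimate $\|(x-1)h\|^2+4\|h\|^2\le 4c^2\|h\|^2$ is immediate. I recommend adding that unitary reduction: it removes all guesswork about which vector to use and turns the lemma into a two-line computation.
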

\begin{proof}
Note first that $$ \left[ \begin{array}{cc} u^* & 0 \\ 0 &
    1 \end{array}\right] \left[ \begin{array}{cc} u & x \\ -1 &
    v \end{array}\right] \left[ \begin{array}{cc} 1 & 0 \\ 0 &
    v^* \end{array}\right]= \left[ \begin{array}{cc} 1 & u^*xv^* \\ -1
    & 1 \end{array}\right],$$ hence without loss of generality we can
assume that $u=v=1$. Take $h\in H$, then $$ \left\|
\left[ \begin{array}{cc} 1 & x \\ -1 & 1 \end{array}\right]
\left[ \begin{array}{c} -h \\ h \end{array}\right]\right\| \leq
c\sqrt{2} \left\| \left[ \begin{array}{c} -h
    \\ h \end{array}\right]\right\|.$$ Therefore $\| x(h)-h \|^2
+ 4\| h \|^2 \leq 4c^2\|h\|^2$, which implies $\| x-1 \| \leq
2\sqrt{c^2-1}$.
\end{proof}

We are now ready to prove the main result of this section.

\begin{thm}\label{defmult} Let $\A$, $\B$ be two unital
$C^*$-algebras. Let $T:\A \to \B$ be a cb-isomorphism with $T(1)=1$
and $\|T\|_{cb}\|T^{-1}\|_{cb}<\sqrt 2$, then
$$\|T^\vee\|_{cb}\leq 2 \sqrt{\Big(\|T\|_{cb}+ \frac {\mu (T)}{\sqrt 2}\Big)^2-1}+ \mu (T)(1+\|T\|_{cb}),$$
$$\| T-T^\star\|_{cb}\leq 2 \sqrt{\Big(\|T\|_{cb}+ \frac {\mu (T)}{\sqrt
      2}\Big)^2-1}+ 2\mu (T),$$
where $\displaystyle{\mu (T)= \max\Big\{ \|T\|_{cb}-1, 1-{\sqrt{\frac 2{\|T^{-1}\|_{cb}^2} -\|T\|_{cb}^2}}\Big\}}$.
\end{thm}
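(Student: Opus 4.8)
The plan is to first pin down how $T$ acts on \emph{unitaries}, where Proposition \ref{imageunit}, Lemma \ref{unit} and Lemma \ref{unitmult} apply directly, and then to lift those pointwise estimates to the completely bounded level by a mean-of-two-unitaries device that costs no $n$-dependent constant. Set $\mu=\mu(T)$ and $c=\|T\|_{cb}+\mu/\sqrt 2$; one checks that $c\geq\|T\|_{cb}\geq\|T(1)\|=1$ and $\mu\geq 0$ (using $\|T(1)\|\geq 1$ and $2\|T^{-1}\|_{cb}^{-2}-\|T\|_{cb}^2\leq 1$), and that the very same $\mu$ and $c$ are attached to every amplification $\id_{\bM_N}\otimes T$, since amplification preserves both $\|\cdot\|_{cb}$ and unitality.

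First I would record the unitary estimates. For contractions $a,b$ in a unital $C^*$-algebra, the factorisation
$$\begin{sbmatrix} a & ab \\ -1 & b\end{sbmatrix}=\begin{sbmatrix} a & 0 \\ 0 & 1\end{sbmatrix}\begin{sbmatrix} 1 & 1 \\ -1 & 1\end{sbmatrix}\begin{sbmatrix} 1 & 0 \\ 0 & b\end{sbmatrix}$$
shows this $2\times2$ matrix has norm $\leq\sqrt 2$; applying the $2$-amplification of $T$ (of norm $\leq\|T\|_{cb}$) and using $T(1)=1$ gives $\big\|\begin{sbmatrix} T(a) & T(ab) \\ -1 & T(b)\end{sbmatrix}\big\|\leq\sqrt 2\,\|T\|_{cb}$. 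Now take $a=u$, $b=v$ unitaries of $\A$: by Proposition \ref{imageunit} and Lemma \ref{unit} there are unitaries $u',v'\in\B$ with $\|T(u)-u'\|\leq\mu$ and $\|T(v)-v'\|\leq\mu$, and substituting them alters only the two diagonal corners, a perturbation of norm $\leq\mu$, so $\big\|\begin{sbmatrix} u' & T(uv) \\ -1 & v'\end{sbmatrix}\big\|\leq\sqrt 2\,c$. Lemma \ref{unitmult} then yields $\|T(uv)-u'v'\|\leq 2\sqrt{c^2-1}$, and a triangle inequality (using $\|T(u)\|\leq\|T\|_{cb}$) gives
$$\|T(uv)-T(u)T(v)\|\leq 2\sqrt{c^2-1}+\mu(1+\|T\|_{cb}).$$
Specialising to $v=u^*$ makes the $(1,2)$-entry equal to $T(1)=1$, so Lemma \ref{unitmult} gives $\|1-u'v'\|\leq 2\sqrt{c^2-1}$; since $v'$ is unitary, $\|u'-(v')^*\|=\|u'v'-1\|$, whence
$$\|T(u)-T(u^*)^*\|\leq\|T(u)-u'\|+\|u'-(v')^*\|+\|v'-T(u^*)\|\leq 2\sqrt{c^2-1}+2\mu.$$
Both inequalities hold verbatim for every $\id_{\bM_N}\otimes T$ in place of $T$.

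Next I would pass to the cb-norms. By the identification of the cb-norm of a bilinear (resp.\ linear) map with the supremum over its matrix amplifications (Section 1.4 of \cite{SS}), $\|T^\vee\|_{cb}=\sup_n\|(T_n)^\vee\|$ and $\|T-T^\star\|_{cb}=\sup_n\|T_n-(T_n)^\star\|$, where $T_n=\id_{\bM_n}\otimes T$, $(T_n)^\vee(A,B)=T_n(AB)-T_n(A)T_n(B)$, and $(T_n)^\vee$ carries the ordinary bilinear norm on $\bM_n(\A)\times\bM_n(\A)$. Fix $n$ and a contraction $X\in\bM_n(\A)$: a routine verification shows that, in the block picture $\bM_{2n}(\A)=\bM_2(\bM_n(\A))$, the element $\widehat X=\begin{sbmatrix} X & 0 \\ 0 & -X^*\end{sbmatrix}$ is the mean $\tfrac12(U_1+U_2)$ of the two unitaries $\begin{sbmatrix} X & D_{X^*} \\ D_X & -X^*\end{sbmatrix}$ and $\begin{sbmatrix} X & -D_{X^*} \\ -D_X & -X^*\end{sbmatrix}$ of $\bM_{2n}(\A)$, with $D_X=(1-X^*X)^{1/2}$, $D_{X^*}=(1-XX^*)^{1/2}$. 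For contractions $A,B\in\bM_n(\A)$, the $(1,1)$-block of $(T_{2n})^\vee(\widehat A,\widehat B)$ is precisely $T_n(AB)-T_n(A)T_n(B)$, while bilinearity gives $(T_{2n})^\vee(\widehat A,\widehat B)=\tfrac14\sum_{i,j}(T_{2n})^\vee(U_i,V_j)$, a mean of four ``defects of multiplicativity of $\id_{\bM_{2n}}\otimes T$ on unitaries'', each of norm $\leq 2\sqrt{c^2-1}+\mu(1+\|T\|_{cb})$ by the previous step; hence $T_n(AB)-T_n(A)T_n(B)$ has norm $\leq 2\sqrt{c^2-1}+\mu(1+\|T\|_{cb})$. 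Likewise the $(1,1)$-block of $(T_{2n}-(T_{2n})^\star)(\widehat X)$ is $T_n(X)-T_n(X^*)^*=\tfrac12\sum_i(T_{2n}-(T_{2n})^\star)(U_i)$, of norm $\leq 2\sqrt{c^2-1}+2\mu$. Taking suprema over $n$ and over matrix contractions yields the two claimed bounds, since $c=\|T\|_{cb}+\mu(T)/\sqrt 2$.

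The routine part consists of the matrix factorisations and triangle inequalities; the real obstacle I anticipate is the last move, namely producing the completely bounded estimates with the \emph{same} constants as the pointwise ones. The point is that $\widehat X$ is a mean of merely two unitaries (so no factor growing with $n$ appears) and that the quantity of interest occupies a corner of the object attached to $\widehat X$. I would also be careful to derive the selfadjointness bound directly from the matrix $\begin{sbmatrix} u & 1 \\ -1 & u^*\end{sbmatrix}$ rather than from the multiplicativity bound applied to $uu^*=1$: the latter detour costs an extra $2\mu(1+\|T\|_{cb})$ and would not reproduce the stated constant.
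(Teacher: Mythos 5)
Your proposal is correct and follows essentially the same route as the paper: the same $2\times 2$ matrix estimate combined with Proposition \ref{imageunit}, Lemma \ref{unit} and Lemma \ref{unitmult} on unitaries, followed by the same $v=u^*$ specialisation for the selfadjointness bound. The only divergence is in the reduction from matrix contractions to unitaries, where you build an explicit mean of two Halmos-type unitary dilations in $\bM_{2n}(\A)$; the paper dispatches this step in one line by citing the Russo--Dye theorem (the closed unit ball is the closed convex hull of the unitaries), which yields the same constants with less work.
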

\begin{proof} We start with the defect of multiplicativity.
From the definition of the Haagerup tensor norm and the Russo-Dye Theorem,
it suffices to show that for any unitaries $u,\,v\in \bM_n(\A)$ we have 
$$\| T_n(uv)-T_n(u)T_n(v)\|_{\bM_n(\B)}\leq 2 \sqrt{\Big(\|T\|_{cb}+ \frac {\mu (T)}{\sqrt 2}\Big)^2-1}+ \mu (T)(1+\|T\|_{cb}),$$
where $T_n=Id_{\bM_n}\tens T$. Without loss of generality, we can assume $n=1$.

Let $u,\,v \in \A$ unitaries, as $\left\| \left[ \begin{array}{cc} u  & uv  \\ -1 & v \end{array}\right] \right\|=\sqrt{2}$, we get
$$\left\| \left[ \begin{array}{cc} T(u) & T(uv) \\ -1 &
    T(v) \end{array}\right] \right\| \leq \|T\|_{cb}\sqrt{2}.$$ From
Lemma \ref{unit} and Proposition \ref{imageunit} we deduce that there
are unitaries $u',\,v'\in \B$ with $\|T(u)-u'\|\leq \mu(T)$ and $\|T(v)-v'\|\leq \mu(T)$. The triangular inequality gives
$$\left\| \left[ \begin{array}{cc} u' & T(uv) \\ -1 &
    v' \end{array}\right] \right\| \leq \|T\|_{cb}\sqrt{2}+ \mu (T).$$
Lemma \ref{unitmult} implies that $\|T(uv)-u'v'\|\leq 2 \sqrt{\Big(\|T\|_{cb}+ \frac {\mu (T)}{\sqrt 2}\Big)^2-1}$, so that we get the estimate using the triangular inequality once more.

 For the second estimate, as $T_n^\star=(T^\star)_n$ we may also assume $n=1$. 
Thanks to the Russo-Dye Theorem, we just need to check that for any $u\in \A$ unitary 
$$ \| T(u)-T(u^*)^*\|\leq 2 \sqrt{\Big(\|T\|_{cb}+ \frac {\mu (T)}{\sqrt
      2}\Big)^2-1}+ 2\mu (T).$$
Taking $v=u^*$ in the above arguments leads to  $\|T(uu^*)-u'v'\|\leq 2 \sqrt{\Big(\|T\|_{cb}+ \frac {\mu (T)}{\sqrt 2}\Big)^2-1}$. Hence 
$\|u'-v'^*\|\leq 2 \sqrt{\Big(\|T\|_{cb}+ \frac {\mu (T)}{\sqrt 2}\Big)^2-1}$ and we conclude using the triangular inequality.
\end{proof}


\section{A noncommutative Amir-Cambern Theorem}

\subsection{Perturbations of multiplications}

\begin{defi}\label{multi} Let $\X$ be an operator space. A bilinear map $\m: \X \times \X \to \X$ is called a \textit{multiplication on} $\X$ if it is associative and extends to the Haagerup tensor product $\X \otimes_{h} \X$.\\
We denote by $\m_\A$ the original multiplication on an operator
algebra $\A$.
\end{defi}

In the following, $H_{cb}^k(\A,\A)$ denotes the $k^{th}$ completely
bounded cohomology group of $\A$ over itself. We refer to \cite{SS}
for precise definitions.

The next proposition is the operator space version 
of Theorem 3 in \cite{RT} or Theorem 2.1 in \cite{J1}. It gives a precise form of small perturbations of 
the product on an operator algebra under cohomological conditions.

As before, the quantity $\| \m-\m_{\A} \|_{cb}$ is the cb-norm of
$\m-\m_{\A}$ as a linear map from $\A \otimes_{h} \A$ into $\A$.

\begin{prop}\label{P:3} Let $\A$ be an operator algebra satisfying $$H_{cb}^2(\A,\A)=H_{cb}^3(\A,\A)=0. \quad (\star)$$ Then there exist $\delta,\, C>0$ such that for every multiplication $\m$ on $\A$ satisfying $\| \m-\m_{\A} \|_{cb} \leq \delta$, there is a completely bounded linear isomorphism $\Phi:\A \to \A$ such that
 $$\| \Phi-\id_\A \|_{cb} \leq C \| \m-\m_{\A} \|_{cb} ~~ \mbox{and}
  ~~ \Phi(\m(x,y))=\Phi(x)\Phi(y).$$ If $\A$ is a von Neumann algebra,
  then $(\star)$ is automatically satisfied with values
 $\delta=1/11$ and $C=10$. Moreover if $\m$ satisfies $\m(x^*,y^*)=\m(y,x)^*$
for all $x,\,y\in \A$, then $\Phi(x^*)=\Phi(x)^*$ for all $x\in \A$.
 \end{prop}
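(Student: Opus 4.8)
The plan is to run the standard Hochschild-cohomology perturbation argument, as in \cite{RT} and \cite{J1}, but throughout using \emph{completely bounded} cocycles so that the Haagerup tensor product is the right home for everything. Write $\m = \m_\A + \psi$, where $\psi : \A \otimes_h \A \to \A$ is completely bounded with $\|\psi\|_{cb} \le \delta$. Associativity of $\m$ together with associativity of $\m_\A$ forces the identity
\begin{equation*}
a\,\psi(b,c) - \psi(ab,c) + \psi(a,bc) - \psi(a,b)\,c = \psi(a,\psi(b,c)) - \psi(\psi(a,b),c),
\end{equation*}
i.e. $(\partial \psi)(a,b,c) = (\psi \circ \psi)(a,b,c)$ where $\partial$ is the Hochschild coboundary on (cb) $2$-cochains and the right-hand side is a quadratic expression in $\psi$ bounded by $2\|\psi\|_{cb}^2$ in cb-norm (using that $\m_\A$ is a complete contraction on the Haagerup tensor product and that the Haagerup norm is ``associative''). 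So $\psi$ is an \emph{approximate} $2$-cocycle: $\|\partial\psi\|_{cb} \le 2\delta^2$. Since $H^3_{cb}(\A,\A) = 0$, pick a bounded splitting of $\partial$ on cb $3$-coboundaries (this is where one needs, quantitatively, that the coboundary operator has closed range with a bounded right inverse — for a von Neumann algebra this is exactly the Sinclair--Smith machinery of \cite{SS}, and one extracts an explicit constant); correcting $\psi$ by a cb $2$-coboundary of size $O(\delta^2)$ we may assume $\psi$ is an honest cb $2$-cocycle up to a genuinely quadratic error, and then because $H^2_{cb}(\A,\A) = 0$ we write $\psi = \partial \phi + (\text{quadratic})$ for a cb linear map $\phi : \A \to \A$ with $\|\phi\|_{cb} = O(\delta)$.

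Next I would build $\Phi$ by the usual exponential/Newton iteration. Set $\Phi_0 = \id_\A$ and $\Phi_1 = \id_\A - \phi$ (or $\exp(-\phi)$); the map $\Phi_1$ conjugates $\m$ into a new multiplication $\m^{(1)} = \m_{\Phi_1} := \Phi_1^{-1}(\m(\Phi_1 \cdot, \Phi_1 \cdot))$ whose defect $\m^{(1)} - \m_\A$ is now $O(\delta^2)$ in cb-norm, because the linear term $\partial\phi$ has been arranged to cancel $\psi$. Iterating — at stage $k$ the defect is at most $c^{2^k}\delta^{2^k}$ for a fixed constant $c$, provided $\delta$ is small enough that $c\delta < 1$ — the infinite product $\Phi = \cdots \Phi_2 \Phi_1 \Phi_0$ converges in cb-norm (quadratic convergence makes the product and all error series converge geometrically), is a cb linear isomorphism of $\A$ with $\|\Phi - \id_\A\|_{cb} \le C\|\m - \m_\A\|_{cb}$ for a constant $C$ coming from the geometric series, and satisfies $\Phi(\m(x,y)) = \Phi(x)\Phi(y)$ by construction. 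The selfadjointness addendum is obtained by checking that the whole construction can be done $\star$-equivariantly: if $\m(x^*,y^*) = \m(y,x)^*$ then $\psi^\star := (x,y) \mapsto \psi(y^*,x^*)^*$ equals $\psi$, so one may replace $\phi$ by $\tfrac12(\phi + \phi^\star)$ — the cohomology vanishing can be performed on the $\star$-invariant part — and each $\Phi_k$ is then $\star$-preserving, hence so is $\Phi$.

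The explicit von Neumann values $\delta = 1/11$, $C = 10$ require (i) Christensen--Effros--Sinclair--Smith: for a von Neumann algebra $\M$ all $H^k_{cb}(\M,\M) = 0$, and moreover the coboundary operators split with \emph{norm-one} cb right inverses (the averaging-over-the-unitary-group / conditional-expectation argument in \cite{SS} gives splitting constants that can be tracked and are essentially $1$), and (ii) bookkeeping the geometric series $\sum 10^{2^k - 1}(1/11)^{2^k}$ and the product $\prod(1 + 10^{2^k-1}(1/11)^{2^k})$ to verify $\delta = 1/11$ keeps everything $< \infty$ with the stated $C = 10$. The main obstacle I anticipate is \textbf{not} the soft cohomology vanishing but the \emph{quantitative} control: one must extract from \cite{SS} explicit bounds on the right-inverses of the completely bounded coboundary maps in degrees $2$ and $3$, and then verify that the Newton iteration's error constants, when fed these bounds, actually close up at $\delta = 1/11$; any slack in the cohomological splitting constants propagates (doubly-exponentially is fine, but the base constant matters) and could force a smaller $\delta$. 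A secondary technical point is making sure each intermediate $\m^{(k)}$ is still a genuine multiplication on $\A$ in the sense of Definition \ref{multi} — associative and bounded on the Haagerup tensor product — which follows because conjugating by a cb isomorphism preserves both properties, but needs to be stated.
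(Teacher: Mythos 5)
Your proposal is correct and follows essentially the same route as the paper: the paper's own proof is only a sketch that defers to Raeburn--Taylor, Johnson's Theorem 2.1 and Sinclair--Smith Theorem 7.4.1, replacing the bounded cochain spaces by their cb versions and taking the splitting constants $K=L=1$ in the von Neumann case, which is exactly the Newton-type iteration with cb cohomological splittings (and the $\star$-symmetrization for the selfadjoint addendum) that you describe. The quantitative bookkeeping you flag as the main obstacle is indeed where the paper's constants $\delta=1/11$, $C=10$ come from, via the estimates $p(x)\leq 9.75x^2$ and $\|W_n-I\|_{cb}\leq \exp\bigl(\sum_i \|S_i\|_{cb}\bigr)\leq 10\varepsilon_0$.
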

\begin{proof} We only give a sketch as it only consists in adapting
 arguments of \cite{RT} (see also \cite{SS} chapter 7) or Theorem 2.1 of \cite{J1} to the operator space category. 

  In the bounded situation, one  has to apply an implicit function 
theorem (Theorem 1 in
 \cite{RT}) to the right spaces of  multilinear maps
 (Theorem 3 in \cite{RT}). This is done in details in Theorem 7.4.1 in 
\cite{SS} from 7.3.1. With the notation there (taking $\M=\A$) one simply need
to replace $\L^k(\M,\M)$ by their cb-version $\L^k_{cb}(\A,\A)$ which are
 obviously Banach spaces. The statement about
 $*$ is justified right after Theorem 7.4.1 in \cite{SS}.\\
If $\A$ is a von Neumann algebra, all completely bounded cohomology
 groups of $\A$ over itself vanish (see \cite{SS} chapter 4.3) and we
 can choose $K=L=1$ in the proof of Theorem 2.1 of \cite{J1}, which
 gives $\delta=11^{-1}$ (see the discussion after the proof of Theorem
 2.1 \cite{J1}) and a computable value of $C$. Now following notation of the proof of Theorem 2.1 \cite{J1}, the rational function $p$ satisfies $p(x) \leq 9.75x^2$, for $x$ small enough.  Hence the sequence $(\varepsilon_i)$ defined by $\varepsilon_{i+1}=p(\varepsilon_i)$ (and $\varepsilon_0=\| \m-\m_{\A} \|_{cb}$) verifies $\varepsilon_i \leq 9.75^{2^i-1}\varepsilon_0^{2^i}$. As $K=L=1$, we have $\| S_{i} \|_{cb} \leq \varepsilon_{i-1} + 2\varepsilon_{i-1} ^2$.
 Then, using the previous estimates of the $\varepsilon_i$'s we get  $\| W_{n} -I \|_{cb} \leq \exp(\sum_{i=1}^n \| S_{i} \|_{cb}) \leq 10\varepsilon_0.$
 (With notation of Theorem 2.1 \cite{J1}, the desired $\Phi$ is obtained as the limit of $(W_{n})_n$.)
\end{proof}

\subsection{Proofs of Theorems A and B}



The proof of Theorem A is a variant of the proof of Theorem B. For clarity
we postpone the quantitative estimate to the next Remark.

\smallskip

\begin{proof}[of Theorem B] As $d_{KK}$ is bounded, the statement is only interesting when $d_{cb}(\A,\B)$ is 
close to 1. The proof uses ideas from Corollary 7.4.2 in \cite{SS}.

Let $L:\A\to \B$ be a cb-isomorphism with $\|L\|_{cb}\leq 1$ and
$\|L^{-1}\|_{cb}\leq d_{cb}(\A,\B)(1+\epsilon)$.

Consider the bidual extension  still denoted by $L:\A^{**}\to \B^{**}$,
it remains a cb-isomorphism and satisfies the
same norm estimates.
 
We suppose $\A$ unital, we will treat the non-unital case afterwards.
 The first step is to unitize $L$. By Lemma \ref{imageunit}, $L(1)$ is invertible in $\B$. Let
 $S=L(1)^{-1}L$, then $S$ is unital and $$\|S\|_{cb}\leq \frac
 {\|L^{-1}\|_{cb}}{\sqrt{2- \|L^{-1}\|_{cb}^2}}$$ and
 $\|S^{-1}\|_{cb}\leq {\|L^{-1}\|_{cb}}$. Note also that $S(\A)=\B$.

 The second step is to make our cb-isomorphism selfadjoint. By Theorem \ref{defmult}, we have $\|S-S^\star\|_{cb}\leq
 2f_1\big(\|L^{-1}\|_{cb}\big)$ for some continuous function with
 $f_1(1)=0$. Let $T=\frac 12\big(S+S^\star\big)$. Then 
$T:\A^{**} \to \B^{**}$ is unital
 $*$-preserving, $\|T-S\|_{cb}\leq f_1\big(\|L^{-1}\|_{cb}\big)$ and 
 $T(\A)\subset\B$. So if
 $\|L^{-1}\|_{cb}$ is close enough to 1, $T$ is also a cb-isomorphism
 such that $T(\A)=\B$ with norm estimates $\|T\|_{cb}\leq f_2\big(\|L^{-1}\|_{cb}\big)$ and $\|T^{-1}\|_{cb}\leq f_2\big(\|L^{-1}\|_{cb}\big)$, for some 
continuous function at $1$ with $f_2(1)=1$.

Define on $\A^{**}$ 
a new multiplication by, for $x,\,y\in \A^{**}$ 
$$\m(x,y)= T^{-1}\big(T(x)T(y)\big).$$ The multiplication $\m$ is
associative and $*$-preserving.  It is obviously completely bounded
and clearly $$\|\m-\m_{\A^{**}}\|_{cb}\leq \|T^{-1}\|_{cb}\|T^{\vee}\|_{cb}.$$
Thus, the estimate in Theorem \ref{defmult} gives that
$\|\m-\m_{\A^{**}}\|_{cb}\leq f_3 \big(\|L^{-1}\|_{cb}\big)$ for some
continuous function $f_3$ with $f_3(1)=0$. If $\|L^{-1}\|_{cb}$ is
close enough to 1, we get from Proposition \ref{P:3}, that there is a
completely bounded $*$-preserving linear isomorphism $\Phi: \A^{**}\to
\A^{**}$ with $\|\Phi-\id_{\A^{**}}\|_{cb}\leq f_4\big(\|L^{-1}\|_{cb}\big)$ and
for $x,\,y\in \A^{**}$
$$\Phi^{-1}\big( \Phi(x)\Phi(y)\big)=\m(x,y)=T^{-1}(T(x)T(y)).$$
Note that necessarily $\Phi(1)=1$, and $\Phi$ is $*$-preserving.

Let $\pi=T\Phi^{-1}:\A^{**}\to \B^{**}$, it is a $*$-preserving
cb-isomorphism. Moreover, for $x,\,y\in \A^{**}$,
$\pi(xy)=\pi(x)\pi(y)$, hence $\pi$ is actually a $*$-isomorphism. Now we check that the $C^*$-algebras $\pi(\A)$ and $\B$ are close for the Kadison-Kastler distance inside $\B^{**}$.
 We have, for $a\in Ball(\A)$:
\begin{equation}\label{II}
 \| \pi(a)-T(a) \|\leq f_4\big(\|L^{-1}\|_{cb}\big),\end{equation} 
and as $T(\A)=\B$, for $b\in Ball(\B)$, we have
$$ 
\|b-\pi\big(T^{-1}(b)\big)\|\leq \|T^{-1}\|_{cb} f_4\big(\|L^{-1}\|_{cb}\big)=f_5\big(\|L^{-1}\|_{cb}\big).$$
From which one easily deduces $d_{KK}(\A,\B)\leq f_5\big(\|L^{-1}\|_{cb}\big)$, for some
continuous function $f_5$ with $f_5(1)=0$.

Now if $\A$ is non-unital, in the preceding proof, $L(1)$ is now invertible in $\B^{**}$ (here $1$ denotes the unit of $\A^{**}$), so $S(\A)=\B$ is not valid anymore. But  the inequality \eqref{II} above still holds and we deduce that for $a\in Ball(\A)$$$ \| \pi(a)-S(a) \|\leq (f_4+f_1)\big(\|L^{-1}\|_{cb}\big).$$ Now from Lemma \ref{unit}, there is a unitary $u$ in $\B^{**}$ such that 
$ \|u- L(1)^{-1}  \| \leq f_6\big(\|L^{-1}\|_{cb}\big)$ for some
continuous function $f_6$ with $f_6(1)=0$. Therefore
$$ \| \pi(a)-uL(a) \|\leq (f_4+f_1+f_6)\big(\|L^{-1}\|_{cb}\big). $$ 
Taking the adjoints we obtain $$ \| \pi(a)-L(a^*)^*u^* \|\leq (f_4+f_1+f_6)\big(\|L^{-1}\|_{cb}\big). $$
Write $a=xy$, for some $x$ and $y$ in the $Ball(\A)$, then
\begin{equation}\label{eq2}
 \| \pi(a)-uL(x)L(y^*)^*u^* \|\leq
 2(f_4+f_1+f_6)\big(\|L^{-1}\|_{cb}\big).\end{equation} As
$L(x)L(y^*)^*$ belongs to $Ball(\B)$, we conclude that the
$C^*$-algebra $\pi(\A)$ is nearly included in the $C^*$-algebra $u\B
u^*$. Let us prove the converse near inclusion. Let $b \in Ball(\B)$,
we can factorize $b=L(x)L(y^*)^*$ with $x,y \in \A$ such that $ \| x
\| \leq \|L^{-1}\|$ and $ \| y \| \leq \|L^{-1}\|$. From inequality \eqref{eq2}, we get
$$ \| \pi(xy)-uL(x)L(y^*)^*u^* \|\leq 2 \|L^{-1}\|^2 (f_4+f_1+f_6)\big(\|L^{-1}\|_{cb}\big). $$
Finally, $d_{KK}(\A,\B)\leq 2 \|L^{-1}\|^2 (f_4+f_1+f_6)\big(\|L^{-1}\|_{cb}\big)$.
\end{proof}

\begin{proof}[of Theorem A] When $\A$ is a separable nuclear $C^*$-algebra, this follows directly from Theorem 4.3 in \cite{C5}
and Theorem B. \\
When $\A$ is a
  von Neumann algebra, one does not need to go 
to the bidual $\A^{**}$ in the preceding proof (to apply Proposition \ref{P:3}), so that we directly conclude that $\pi:\A\to \B$ is a $*$-isomorphism. But we should mention another way (which improves theoretically our bound in the von Neumann algebras case): the second step in the proof is not necessary, just define directly a new multiplication using the cb-isomorphism $S$ (instead of $T$). Then $\pi$ is just an algebra isomorphism (not necessarily selfadjoint), but it is enough to conclude thanks to Theorem 3 in \cite{Ga}.
\end{proof}


\begin{rk}\label{quant}
We give a rough estimate for the constants in Theorems A and B.  With notation from the proof
of Theorem B, we start with $\|L\|_{cb}\leq 1$ and
$\delta=\|L^{-1}\|_{cb}-1$, then $$\|S\|_{cb} \leq \frac {1+\delta}
{(2-(1+\delta)^2)^{1/2}}$$ and $\|S^{-1}\|_{cb} \leq 1+\delta$. From
now, we assume that $\delta\leq \frac 1{10}$. One easily checks,
computing derivative that
 $$\mu(S)= \max\Big\{ \|S\|_{cb}-1, 1-{\sqrt{\frac 2{\|S^{-1}\|_{cb}^2} -\|S\|_{cb}^2}}\Big\}\leq 2\delta,$$
$$\|S\|_{cb}<1+ 3\delta.$$ Then $$\| S-S^\star\|_{cb}\leq
2\sqrt{(1+3\delta+\sqrt 2 \delta)^2-1}+4\delta \leq 10 \sqrt \delta=2f_1(\delta).$$
We get that $T$ is invertible as soon as $5\sqrt \delta < \frac 1
{1+\delta}$.  Let us now assume that $\delta<\frac 1{200}$ so that
$$\|T\|_{cb} \leq \|S\|_{cb} +\|T-S\|_{cb} \leq 1+ 6 \sqrt \delta,$$
 $$\|T^{-1}\|_{cb} \leq \frac{\|S^{-1}\|_{cb}}
{1-\|S^{-1}\|_{cb}\|S-T\|_{cb}}\leq \frac {1+\delta}
{1-5(1+\delta)\sqrt\delta}\leq 1+ 8\sqrt\delta,$$ Now we get
$\displaystyle{ \mu(T) \leq 40 \sqrt \delta}$. Thus basic estimates
lead to $\|T^{-1}\|_{cb}\|T^\vee\|_{cb}\leq 180\sqrt \delta$, so we
can choose $f_3(1+\delta)=180\sqrt \delta$ for $\delta<\frac 1
{200}$. We need $f_3(1+\delta)<1/11$ to apply Proposition \ref{P:3},
so we assume $\delta<2.10^{-7}$. Hence for the von Neumann algebras
case of Theorem A, we could choose $\varepsilon_0=2.10^{-7}$, but we
will improve this bound later.

As $C=10$ in Proposition \ref{P:3}, $f_4=10f_3$. Moreover $f_6(1+\delta)=\|L(1)^{-1}\|-1=3\delta$.
Finally we obtain $d_{KK,cb}(\A,\B)\leq 3620 \sqrt \delta$.

  We need $d_{KK}(\A,\B)<1/420000$ to conclude
 for separable nuclear $C^*$-algebras. Finally, Theorem A for separable nuclear $C^*$-algebras is true with $\varepsilon_0=3.10^{-19}$.
 
For von Neumann algebras, as explained in the proof of theorem A,  
we only need  to deal with $S$. Hence $\varepsilon_0=4.10^{-6}$ is enough to 
ensure $\Vert S^{-1} \Vert_{cb} \Vert S^{\vee} \Vert_{cb}\leq 88\sqrt \delta < 1/11$ and to get the conclusion.
\end{rk}

\begin{rk}  It is clear that if any two $C^*$-algebras satisfy $d_{cb}(\A,\B)<1+4.10^{-6}$, then  $A^{**}$ and $B^{**}$ are $*$-isomorphic. We should also note that if preduals $\M_*,\N_*$ of von Neumann algebras satisfy $d_{cb}(\M_*,\N_*)<1+4.10^{-6}$, then $\M_*$ and $\N_*$ are completely isometric.\end{rk}

\begin{rk}  If a $C^*$-algebra $\A$ satisfies hypothesis of Proposition \ref{P:3} (with some constants $C$ and $\delta$), then $\A$ satisfies the conclusion of Theorem A (with $\varepsilon_0$ depending on $C$ and $\delta$). In particular, Johnson showed in \cite{J1} that separable unital commutative
$C^*$-algebras satisfy $H^2_{cb}(\A,\A)=H^3_{cb}(\A,\A)=\{0\}$. Thus,
  the above proof recovers the classical Amir-Cambern theorem (but
  with a worse $\varepsilon_0$, it is known that $\varepsilon_0=1$ is
  optimal is the commutative case).
\end{rk}

\begin{rk}  Actually when  $\A$ is a von Neumann algebra, we have proved slightly more: for any $C^*$-algebra $\B$,  for any completely bounded
linear isomorphism $L: \A \to \B $ such that $\|L\|_{cb} \|L^{-1}\|_{cb}\leq 1+\epsilon$ with $\epsilon < 2.10^{-7}$, then there exists a surjective linear complete isometry $J: \A \to \B$ such that $ \| J-L \|_{cb} \leq 1808 \sqrt \epsilon$. (With notation of the preceeding proof, just take $J=u^*\pi$). \end{rk}

\begin{rk}  For two operator spaces $\X,\Y$, denote: $d_{2}(\X,\Y)=\inf \big\{ \| \id_{\mathbb{M}_2} \otimes T \| \| \id_{\mathbb{M}_2} \otimes T^{-1} \|
\big\}, $ where the infimum runs over all possible linear isomorphisms
$T:\X \to \Y$. If a von Neumann algebra $\A$ satisfies
$H^2(\A,\A)=H^3(\A,\A)=\{0\}$ (which is true for factors of type $I$,
type $II_\infty$, type $III$ and type $II_1$ with property gamma or
admitting a Cartan MASA, see \cite{SS}), then the conclusion of Theorem
A is valid with assumption on $d_{2}$ instead of $d_{cb}$ (because norms estimates
needed in section 2 only require computations on $2 \times 2$
matrices).
\end{rk}

 We now give a counter-example to Amir-Cambern theorem for general (nuclear) $C^*$-algebras. Let us recall the main result of \cite{CC}:
\begin{thm}\label{contreex} There exist a
 family $(\C_\theta)_{\theta \in [0,\pi[}$ of non separable
     $C^*$-subalgebras of $\mathbb B(\ell_2)$ containing $\mathbb
     K(\ell_2)$ such that $d_{cb}(\C_\theta,\C_\tau)\leq C
     |\theta-\tau|$, $d_{KK,\ell_2}(\C_\theta,\C_\tau)\leq C
     |\theta-\tau|$ for some $C>0$ but $\C_0$ is not isomorphic to
     $\C_\theta$ with $\theta\neq 0$.
\end{thm}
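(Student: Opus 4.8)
We only indicate the structure of the argument; the construction itself and all the verifications are those of Choi and Christensen \cite{CC}.

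The family has the form $\C_\theta=C^*(\mathbb K(\ell_2),u_\theta)$, where $\theta\mapsto u_\theta$ is a path of unitaries on $\ell_2$ that is Lipschitz for the operator norm, say $\|u_\theta-u_\tau\|\le C_0|\theta-\tau|$, and is designed so that the $\C_\theta$ are non separable and so that the essential extension $0\to\mathbb K(\ell_2)\to\C_\theta\to\C_\theta/\mathbb K(\ell_2)\to 0$ (equivalently its Busby invariant, together with the isomorphism class of the diffuse quotient $\C_\theta/\mathbb K(\ell_2)$) depends genuinely on $\theta$. Each $\C_\theta$ is unital, with unit $1_{\bB(\ell_2)}=u_\theta^*u_\theta$, and contains $\mathbb K(\ell_2)$. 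Non separability is exactly what leaves room for such a deformation: by Theorem A no analogue can exist among separable nuclear $C^*$-algebras.

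First I would establish the two distance estimates. Since $\mathbb K(\ell_2)\subseteq\C_\theta\cap\C_\tau$ and each $\C_\theta$ is the closed linear span of $\mathbb K(\ell_2)$ and of the monomials in $u_\theta,u_\theta^*$, the bound $\|u_\theta-u_\tau\|\le C_0|\theta-\tau|$ propagates through products and sums, so a routine Hausdorff-distance computation inside $\bB(\ell_2)$ gives $d_{KK,\ell_2}(\C_\theta,\C_\tau)\le C|\theta-\tau|$. For the cb-distance one then invokes the standard principle that a sufficiently small Kadison-Kastler perturbation of one $C^*$-algebra onto another is implemented by a linear isomorphism whose cb-norm and whose inverse's cb-norm are $1+O(|\theta-\tau|)$; in fact \cite{CC} exhibit a unital complete order isomorphism between suitable pairs, so that $d_{cb}(\C_\theta,\C_\tau)$ is even equal to $1$. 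Either way the stated Lipschitz control of $d_{cb}$ holds on the relevant range.

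It remains to see that $\C_0$ is not $*$-isomorphic to $\C_\theta$ when $\theta\ne0$. Any $*$-isomorphism $\C_0\to\C_\theta$ must send $\mathbb K(\ell_2)$ onto $\mathbb K(\ell_2)$, this ideal being canonically distinguished inside $\C_\theta$ (for instance as the closure of its socle, the quotient having been chosen diffuse); hence it descends to a $*$-isomorphism of the quotients compatible with the extension data. Since the path was built precisely so that this invariant separates the parameters, this is impossible, which is the desired contradiction. This last step is the heart of the matter and the main obstacle: one has to produce a norm-Lipschitz deformation along which an honest $C^*$-isomorphism invariant genuinely moves, and this is possible only for non separable algebras, for otherwise Theorem A would be contradicted. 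For the details we refer to \cite{CC}.
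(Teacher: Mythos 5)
The paper offers no proof of this statement: it is recalled verbatim as the main result of \cite{CC}, with only the one-line addendum that the isomorphisms are completely positive. So the only thing to evaluate is whether your sketch is a faithful account of the Choi--Christensen construction, and it is not; two of the inaccuracies are fatal rather than cosmetic. First, an algebra of the form $C^*(\mathbb K(\ell_2),u_\theta)$ with a single unitary $u_\theta$ is separable, flatly contradicting the non-separability that you yourself single out as indispensable; the actual construction rotates an \emph{uncountable} family of generators by the angle $\theta$. Second, your parenthetical that \cite{CC} produce a \emph{unital} complete order isomorphism, ``so that $d_{cb}(\C_\theta,\C_\tau)$ is even equal to $1$'', refutes the very theorem it is supposed to support: a surjective unital complete order isomorphism between unital $C^*$-algebras is a unital complete isometry, hence multiplicative by Theorem 4.5.13 of \cite{BLM} (the fact this paper leans on throughout), hence a $*$-isomorphism. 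It would also make Corollary \ref{evian} pointless, since that corollary exists precisely because the individual pairs $(\C_\theta,\C_\tau)$ do \emph{not} realize cb-distance $1$; the completely positive isomorphisms of \cite{CC} are not unital.

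The non-isomorphism step is also misdescribed. Because each $\C_\theta$ contains $\mathbb K(\ell_2)$ and hence acts irreducibly, any $*$-isomorphism $\C_0\to\C_\theta$ maps the compacts (the unique minimal closed ideal) onto the compacts and is therefore spatial, implemented by a unitary of $\bB(\ell_2)$; Choi and Christensen then rule out such a unitary by a cardinality argument against the uncountable generating family. Saying that ``the path was built precisely so that this invariant separates the parameters'' is circular, and invoking Theorem A to explain why only non-separable algebras can behave this way inverts the logic: the counterexample is what delimits Theorem A, not the other way round. As it stands your proof consists of a deferral to \cite{CC} wrapped around a description of \cite{CC} that is internally inconsistent; if the intention is, as in the paper, simply to cite the reference, then do only that and drop the incorrect gloss.
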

Actually the isomorphisms $\C_\theta\to \C_\tau$ are completely positive. This shows somehow that Theorem A is optimal in full
generality.
We conclude with the following application
\begin{cor}\label{evian} 
There exists two (non separable) non isomorphic $C^*$-algebras $\C$ 
and $\D$ such that $d_{cb}(\C, \D)=1$.
\end{cor}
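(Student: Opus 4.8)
The plan is to amplify the Choi--Christensen family $(\C_\theta)_{\theta\in[0,\pi[}$ of Theorem \ref{contreex} by an ``absorption at infinity'' device. Fix a sequence $(\theta_n)_{n\geq1}$ in $]0,\pi[$ with $\theta_n\to0$; Theorem \ref{contreex} then gives $d_{cb}(\C_0,\C_{\theta_n})\to1$ as $n\to\infty$, while $\C_0\not\cong\C_{\theta_n}$ for every $n$ since $\theta_n\neq0$. Let $\C$ be the $c_0$-direct sum $\bigoplus_{n\geq1,\,k\geq1}\C_{\theta_n}$, so that each $\C_{\theta_n}$ occurs countably many times in $\C$ and $\C_0$ does not occur, and set $\D=\C_0\oplus\C$. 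Both are non-separable since each $\C_\theta$ is, and I claim $d_{cb}(\C,\D)=1$ while $\C\not\cong\D$.

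For the distance, fix $\ep>0$ and choose $N$ with $d_{cb}(\C_0,\C_{\theta_N})<1+\ep$. Take a cb-isomorphism $\sigma:\C_0\to\C_{\theta_N}$ with $\|\sigma\|_{cb}\|\sigma^{-1}\|_{cb}<1+\ep$; after rescaling, $\|\sigma\|_{cb}=\|\sigma^{-1}\|_{cb}=s$ with $1\leq s$ and $s^2<1+\ep$. Deleting one of the countably many copies of $\C_{\theta_N}$ from $\C$ yields a $C^*$-algebra $*$-isomorphic to $\C$ itself, via an obvious reindexing $\tau$. Then $\Psi=\sigma\oplus\tau$ is a cb-isomorphism from $\D=\C_0\oplus\C$ onto $\C$, and since the cb-norm of a $c_0$-direct sum of maps is the supremum of the cb-norms of the pieces, $\|\Psi\|_{cb}=\|\Psi^{-1}\|_{cb}=s$, so $d_{cb}(\C,\D)\leq s^2<1+\ep$. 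As $\ep$ is arbitrary and $d_{cb}\geq1$ always, $d_{cb}(\C,\D)=1$.

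For the non-isomorphism, the point is that every $\C_\theta$ is \emph{indecomposable}: containing $\mathbb K(\ell_2)$, it acts irreducibly on $\ell_2$, whence $Z(M(\C_\theta))=\bC1$, so $\C_\theta$ admits no non-trivial direct-sum decomposition. Since $M(\C)\cong\prod_{n,k}M(\C_{\theta_n})$, a central projection of $M(\C)$ is the indicator of a subset of the index set, and the associated direct summand of $\C$ is indecomposable precisely when that subset is a singleton; hence, up to $*$-isomorphism, the indecomposable direct summands of $\C$ are exactly the $\C_{\theta_n}$, $n\geq1$. But $\C_0$ is an indecomposable direct summand of $\D$ which is $*$-isomorphic to none of the $\C_{\theta_n}$. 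Since the family of isomorphism classes of indecomposable direct summands is an isomorphism invariant, $\C$ and $\D$ are not $*$-isomorphic.

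The step I expect to require the most care is this last one: one must make sure that ``indecomposable direct summand'' really is an isomorphism invariant under control for $c_0$-direct sums, which is why it is convenient to phrase everything through central projections of multiplier algebras and to use that $\C_\theta\supseteq\mathbb K(\ell_2)$ forces $Z(M(\C_\theta))=\bC1$. The remaining ingredients — the reindexing $*$-isomorphism $\C\ominus\C_{\theta_N}\cong\C$ and the behaviour of cb-norms on $c_0$-direct sums — are routine.
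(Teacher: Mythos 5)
Your proof is correct, and it follows the same overall strategy as the paper's --- build $\C$ and $\D$ as $c_0$-direct sums over the Choi--Christensen family, estimate $d_{cb}$ by matching summands pairwise, and separate the two algebras by observing that minimal central projections of the multiplier algebra pick out the individual summands, only one of the two algebras having a summand isomorphic to $\C_0$. The difference is in the bookkeeping of the index sets. The paper takes $\C=\oplus_{\theta\in\bQ\cap[0,1[}\C_\theta$ and $\D=\oplus_{\theta\in\bQ\cap]0,1[}\C_\theta$ and matches summands via a bijection $f:\bQ\cap[0,1[\;\to\;\bQ\cap]0,1[$ moving every point by less than $\ep$, which uses the full Lipschitz estimate $d_{cb}(\C_\theta,\C_\tau)\leq 1+C|\theta-\tau|$ across the whole family. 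Your Hilbert-hotel construction (each $\C_{\theta_n}$ with infinite multiplicity, so that deleting one copy is harmless) absorbs $\C_0$ into a single nearby $\C_{\theta_N}$ and therefore only needs $d_{cb}(\C_0,\C_{\theta_n})\to 1$, i.e.\ continuity of the family at $\theta=0$; this is a genuinely weaker input and arguably a cleaner reduction, at the cost of introducing multiplicities. Your non-isomorphism argument, phrased through $Z(M(\C_\theta))=\bC 1$ (forced by $\K(\ell_2)\subseteq\C_\theta$ acting irreducibly) and $M(\oplus_i\A_i)\cong\prod_i M(\A_i)$, is a more carefully justified version of the paper's one-line appeal to minimal central projections, and it is sound. (Note that both you and the paper implicitly read the estimate of Theorem \ref{contreex} as $d_{cb}(\C_\theta,\C_\tau)\leq 1+C|\theta-\tau|$, since $d_{cb}\geq 1$ always; this is clearly what is intended.)
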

\begin{proof}
Just take $\C= \oplus_{\theta \in \mathbb Q\cap [0,1[}\C_\theta $ and
    $\D= \oplus_{\theta \in \mathbb Q\cap ]0,1[}\C_\theta$. For any
    $\varepsilon>0$, there is a bijection $f:\mathbb Q\cap
    [0,1[\to\mathbb Q\cap ]0,1[$ with $|f(x)-x|<\varepsilon$, thus
        $d_{cb}(\C, \D)<C\varepsilon$. But $\C$ and $\D$ are not
        isomorphic. Indeed for any $\theta$, $\C_\theta$ has a trivial center, 
 hence there is a minimal central projection $p\in \C$ with $p\C=\C_0$ whereas
$q\D\neq \C_0$ for all minimal central projections $q\in \D$.
\end{proof}

\subsection{Stability of length}

For details on the notion of length, see \cite{Pi1} and
\cite{Pi2}. To prove that the length function is locally constant, the first step is to notice that the length is stable for
cb-close multiplications, then an application of Theorem
\ref{defmult} allows to conclude.

The next lemma is folklore in perturbation theory, we just sketch the proof.

\begin{lemma}\label{L:2} Let $S,T: \X \to \Y$ be two completely bounded linear maps between operator spaces such that $\tilde{T}:\X / \ker T \to \Y$ is a cb-isomorphism with $\Vert \tilde{T}^{-1} \Vert_{cb} \leq K$. If $\Vert T-S \Vert_{cb}<1/K$, then $\tilde{S}:\X / \ker S \to \Y$ is also a cb-isomorphism and $$\Vert \tilde{S}^{-1} \Vert_{cb} \leq \frac{K}{1-K\Vert T-S \Vert_{cb}}.$$
\end{lemma}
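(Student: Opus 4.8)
The plan is to run a successive-approximation (Neumann-series) argument simultaneously at every matrix level. First I would unpack the hypothesis in operator-space terms: since $\tilde{T}^{-1}:\Y\to\X/\ker T$ is completely bounded with $\|\tilde{T}^{-1}\|_{cb}\le K$ and $\bM_n(\X/\ker T)=\bM_n(\X)/\bM_n(\ker T)$ isometrically, for every $n$, every $y\in\bM_n(\Y)$ and every $\epsilon>0$ there is $x\in\bM_n(\X)$ with $T_n(x)=y$ and $\|x\|\le(K+\epsilon)\|y\|$. Set $\eta=\|T-S\|_{cb}<1/K$ and fix $\epsilon>0$ small enough that $r:=(K+\epsilon)\eta<1$.

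Then I would iterate. Given $y\in\bM_n(\Y)$, pick $x_0$ with $T_n(x_0)=y$ and $\|x_0\|\le(K+\epsilon)\|y\|$; then $y-S_n(x_0)=(T-S)_n(x_0)$ has norm at most $r\|y\|$. Inductively, given $x_0,\dots,x_j$ with $\|x_j\|\le(K+\epsilon)r^{j}\|y\|$ and $\|y-S_n(x_0+\cdots+x_j)\|\le r^{j+1}\|y\|$, apply the lifting property of $T$ to the element $y-S_n(x_0+\cdots+x_j)$ to obtain $x_{j+1}$ with $T_n(x_{j+1})=y-S_n(x_0+\cdots+x_j)$ and $\|x_{j+1}\|\le(K+\epsilon)r^{j+1}\|y\|$; then $y-S_n(x_0+\cdots+x_{j+1})=(T-S)_n(x_{j+1})$ has norm at most $r^{j+2}\|y\|$. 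Since $r<1$, the series $x=\sum_{j}x_j$ converges in $\bM_n(\X)$, satisfies $S_n(x)=y$, and $\|x\|\le\frac{K+\epsilon}{1-r}\|y\|=\frac{K+\epsilon}{1-(K+\epsilon)\eta}\|y\|$.

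This already shows $S_n$ is surjective for every $n$; in particular $S$ is onto and $\tilde{S}:\X/\ker S\to\Y$ is a bijection (injectivity being automatic from quotienting by the kernel). Letting $\epsilon\downarrow 0$ in the estimate above gives $\|\tilde{S}^{-1}_n(y)\|=\inf\{\|x\|:S_n(x)=y\}\le\frac{K}{1-K\eta}\|y\|$ for all $n$ and all $y\in\bM_n(\Y)$, i.e. $\|\tilde{S}^{-1}\|_{cb}\le K/(1-K\|T-S\|_{cb})$. Since $\tilde{S}$ is completely bounded with $\|\tilde{S}\|_{cb}\le\|S\|_{cb}<\infty$, it is a cb-isomorphism with the stated bound.

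The only real care needed is the bookkeeping across matrix levels together with the $\epsilon$ that arises because the quotient operator-space norm need not be attained, and the convergence of $\sum_j x_j$, which uses completeness of $\X$ (implicit in the standing conventions on operator spaces). Everything else is the classical estimate that a small perturbation of a (completely) bounded-below surjection is again a bounded-below surjection.
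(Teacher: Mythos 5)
Your proof is correct and follows essentially the same successive-approximation argument as the paper: lift $y$ through $T$ at each matrix level, measure the error via $\|T-S\|_{cb}$, and iterate to build a convergent series of corrections. The only cosmetic difference is that you carry an explicit $\epsilon$ for the non-attainment of the quotient norm and sum unnormalized terms, whereas the paper rescales the residual by $\alpha$ at each step; the estimates are identical.
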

\begin{proof}
Let $y$ be in the unit ball of $\mathbb{M}_n(\Y)$. Then there exists $x_0$ in $\mathbb{M}_n(\X)$, $\Vert x_0 \Vert < K $ such that $T(x_0)=y$. Hence
$$\Vert y-S(x_0) \Vert< \alpha, $$ where $\alpha=K\Vert T-S \Vert_{cb}<1$.
Applying the same procedure to $\frac{1}{\alpha}( y-S(x_0))$, we obtain $x_1$ in $\mathbb{M}_n(\X)$, $\Vert x_1 \Vert < K $ such that
$$\Vert y-S(x_0+\alpha x_1) \Vert< \alpha^2$$
proceeding by induction, we obtain the result.
\end{proof}

From \cite{BLM} Theorem 5.2.1, we know that an operator space $\A$
endowed with a multiplication $\m$ (in the sense of Definition
\ref{multi}) is cb-isomorphic via an algebra homomorphism to an actual
operator algebra. As the length is invariant under algebraic
cb-isomorphisms, it makes sense to talk about the length of $\A$
equipped with $\m$. As $\max$ is functorial, equivalences of Theorem
4.2 in \cite{Pi1} remain true in the case of a completely bounded
multiplication (not necessarily completely contractive). We denote by
$\m^l$ the $l$-linear map defined (by associativity) on $\A^l$.

\begin{prop}\label{P:4}  Let $\A$ be a unital operator algebra of length at most $\ell$ and length constant at most $K$. Let $\m$ be another multiplication on $\A$ such that $\Vert \m^\ell-\m_{\A}^\ell \Vert_{cb} < 1/K$. Then $\A$ equipped with $\m$ has also length at most $\ell$.
\end{prop}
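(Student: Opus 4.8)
The plan is to compare the length characterization for the two multiplications using the factorization description of length from Theorem 4.2 in \cite{Pi1}, adapted (as noted before the statement) to completely bounded multiplications. Recall that $\A$ (with $\m_\A$) has length at most $\ell$ with length constant at most $K$ means: for every $n$ and every $a \in \bM_n(\A)$, one can factor $a = \alpha_0 D_1 \alpha_1 D_2 \cdots D_\ell \alpha_\ell$ with the $\alpha_i$ scalar rectangular matrices, the $D_i$ diagonal matrices with entries in $\A$, and $\prod \|\alpha_i\| \prod \|D_i\| \le K^{?}\|a\|$ (with the usual normalization where the length constant $K$ controls the product of norms). Equivalently, writing this in terms of $\m_\A^\ell$: the map sending a tuple of diagonal blocks (amplified appropriately) to their $\ell$-fold product, sandwiched by scalars, is a complete surjection from a fixed universal operator space onto $\A$ with a quantitative lower bound $K$ on the inverse (on the quotient by its kernel). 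The key point is that $\m^\ell$ and $\m_\A^\ell$ differing by less than $1/K$ in cb-norm means the two surjections differ by less than $1/K$ in cb-norm, so Lemma \ref{L:2} applies directly and gives that the $\m$-version is still a complete surjection — i.e.\ $\A$ equipped with $\m$ still has length at most $\ell$ (with a possibly larger length constant $\tfrac{K}{1-K\|\m^\ell - \m_\A^\ell\|_{cb}}$, which is fine since only "at most $\ell$" is claimed).

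Concretely, first I would set up the relevant universal operator space. Fix $n$; by the length-$\ell$ factorization, consider the linear map $P_n \colon E_n \to \bM_n(\A)$ where $E_n$ is the operator space of all tuples $(\alpha_0, D_1, \alpha_1, \dots, D_\ell, \alpha_\ell)$ of admissible shape (with its natural operator space structure making $\prod\|\alpha_i\|$ a modulus on the scalar part), and $P_n$ sends such a tuple to $\alpha_0 \cdot (\m_\A^\ell\text{-amplified product of the } D_i) \cdot \alpha_\ell$ with the $\alpha_1, \dots, \alpha_{\ell-1}$ inserted between factors. A cleaner route: because the scalar matrices $\alpha_i$ carry no algebra dependence, absorb them and reduce to comparing the two $\ell$-linear maps $\m^\ell, \m_\A^\ell \colon \A^{\tens_h \ell} \to \A$ amplified to $\bM_n$; the hypothesis is precisely that $\|\id_{\bM_n}\tens(\m^\ell - \m_\A^\ell)\| < 1/K$ uniformly in $n$. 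The length-$\ell$, constant-$K$ hypothesis says the amplified $\m_\A^\ell$-based factorization map $\widetilde{T}$ is a cb-isomorphism onto $\A$ with $\|\widetilde{T}^{-1}\|_{cb} \le K$ after quotienting by its kernel. Then with $S$ the corresponding $\m^\ell$-based map, Lemma \ref{L:2} yields that $\widetilde{S}$ is a cb-isomorphism too, with a controlled inverse norm, which says exactly that every $a \in \bM_n(\A)$ admits a length-$\ell$ factorization with respect to $\m$ with product of norms bounded by $\tfrac{K}{1-K\|\m^\ell-\m_\A^\ell\|_{cb}}$. Since this holds uniformly in $n$, $\A$ with $\m$ has length at most $\ell$.

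The main obstacle is bookkeeping rather than conceptual: one must make precise the operator-space model in which "length at most $\ell$ with constant $K$" is literally the statement "a certain canonical surjection has cb-bounded right inverse with bound $K$ on the quotient," and check that replacing $\m_\A$ by $\m$ changes only that surjection and by exactly $\|\m^\ell - \m_\A^\ell\|_{cb}$ in cb-norm after amplification — in particular that the scalar-matrix prefactors and interleaving factors, which are algebra-independent, do not inflate the perturbation. This is where the reformulation of Theorem 4.2 of \cite{Pi1} via $\max$-functoriality (invoked just before the statement) is used: it guarantees the factorization-length description is equivalent to the representation-theoretic one even for a merely completely bounded $\m$, so that Lemma \ref{L:2} can be applied in the right category. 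Once that identification is in place, the estimate is immediate from Lemma \ref{L:2}.
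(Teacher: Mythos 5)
Your proposal is correct and follows essentially the same route as the paper: the ``cleaner route'' you describe --- viewing the length hypothesis as saying that the multiplication map $\tilde{T_\ell}\colon \max(\A)^{\otimes_h \ell}/\ker T_\ell \to \A$ is a cb-isomorphism with $\Vert\tilde{T_\ell}^{-1}\Vert_{cb}\le K$ (Pisier's Theorem 4.2), noting that $\Vert T_\ell - S_\ell\Vert_{cb} \le \Vert \m^\ell - \m_\A^\ell\Vert_{cb} < 1/K$, and invoking Lemma \ref{L:2} --- is exactly the paper's proof.
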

\begin{proof}
Let us denote by $T_l:\max(\A) ^{\otimes_h \ell} \to \A$
(resp. $S_\ell:\max(\A) ^{\otimes_h \ell} \to \A$) the completely bounded
linear map induced by the original multiplication $\m_\A$ (resp. by
the new multiplication $\m$) on the $\ell$-fold Haagerup tensor product
of $\max(\A)$ (i.e. $\A$ endowed with its maximal operator space
structure). The hypothesis that $\A$ has length at most $\ell$ and
length constant at most $K$ exactly means that $\tilde{T_\ell}:\max(\A)
^{\otimes_h \ell}/ \ker T_\ell \to \A$ is a cb-isomorphism with $\Vert
\tilde{T_\ell}^{-1} \Vert_{cb} \leq K$ (see \cite{Pi1} Theorem 4.2). But
$\Vert \m^\ell-\m_{\A}^\ell \Vert_{cb} < 1/K$ implies that $\Vert T_\ell-S_\ell
\Vert_{cb}<1/K$. By Lemma \ref{L:2}, $\tilde{S_\ell}:\max(\A) ^{\otimes_h
  \ell} / \ker S_\ell \to \A$ is also a cb-isomorphism, so the result
follows.
\end{proof}

\begin{proof}[of Theorem C]
As in the proof of Theorem B, let $L: \A \to \B$ a linear
cb-isomorphism with $\Vert L \Vert_{cb} \Vert L^{-1} \Vert_{cb} <
1+\epsilon$. We consider $S=L(1)^{-1}L$ and the multiplication $\m$ on
$\A$ defined by $$\m(x,y)=S^{-1}(S(x)S(y)).$$ Hence $$\Vert
\m^\ell-\m_{\A}^\ell \Vert_{cb} \leq \Vert S^{-1} \Vert_{cb} \Vert S^{\vee
  \ell} \Vert_{cb}.$$ Since $\Vert S^{-1} \Vert_{cb} \leq
1+f(\epsilon)$ and $\Vert S \Vert_{cb} \leq 1+f(\epsilon)$ with
$f(\epsilon)$ tending to $0$ when $\epsilon$ tends to $0$, by
Theorem \ref{defmult}, $\Vert S^{-1} \Vert_{cb} \Vert S^{\vee \ell} \Vert_{cb}< 1/K $, if
$\epsilon$ is small enough. (Naturally here, $S^{\vee \ell}$ denotes the $\ell$-linear map defined on $\A^l$ by $S^{\vee \ell}(x_1, \dots, x_\ell)=S(x_1 \dots x_\ell)-S(x_1) \dots S( x_\ell)$). Therefore by Proposition \ref{P:4},
$\A$ equipped with $\m$ has also length at most $\ell$. But $S$ is a
cb-isomorphic algebra isomorphism from $\A$ equipped with $\m$ onto
$\B$, so $\B$ has length at most $\ell$ as well.\\
For the quantitative estimates, clearly $\Vert S^{\vee \ell} \Vert_{cb} \leq \Vert S^{\vee } \Vert_{cb}\sum _{k=0}^{\ell -2} \Vert S \Vert_{cb}^k$,
hence to apply Proposition \ref{P:4} we need $\Vert S^{-1} \Vert_{cb} \Vert S^{\vee \ell} \Vert_{cb}\leq 88\sqrt \delta 2^{\ell -1}< 1/K$, which gives the explicit bound.
\end{proof}

\begin{rk} We have just proved that the length is stable under cb-isomorphisms with small bounds. More generally, we have the following general principle: any property which is stable under perturbation by cb-close multiplications is also stable under perturbation by cb-isomorphisms with small bounds.
\end{rk}

\textbf{Acknowledgements.} The authors would like to thank Stuart White for mentioning the special case of von Neumann algebras in Proposition \ref{P:3} and Roger Smith for his valuable comments. The second author is grateful to Narutaka Ozawa for hosting him at RIMS in Kyoto when this work started.

\hfill \noindent \textbf{\'Eric Ricard} \\
\null \hfill Laboratoire de Math\'ematiques Nicolas Oresme \\ \null \hfill Universit\'e de Caen Basse-Normandie \\ \null \hfill
14032 Caen Cedex, France  \\ \null \hfill\texttt{eric.ricard@unicaen.fr}

\medskip

\hfill \noindent \textbf{Jean Roydor} \\
\null \hfill Institut de Math\'ematiques de Bordeaux  \\ \null \hfill 
Universit\'e Bordeaux 1 \\ \null \hfill
 351 Cours de la Libération \\
\null \hfill 33405 Talence Cedex, France\\
 \null \hfill\texttt{jean.roydor@math.u-bordeaux1.fr}

\end{document}